\newtheorem{thm}{Theorem}[section]
\newtheorem{cor}[thm]{Corollary}
\newtheorem{lem}[thm]{Lemma}
\newtheorem{prob}[thm]{Problem}
\newtheorem{obs}[thm]{Observation}
\newtheorem{prop}[thm]{Proposition}
\newtheorem{defn}[thm]{Definition}
\newtheorem{rem}[thm]{\bf{Remark}}
\numberwithin{equation}{section}
\begin{document}



\title{Total Dominator Chromatic Number of a Graph}
\author{Adel P. Kazemi\\
College of Mathematical Sciences\\
University of Mohaghegh Ardabili\\
P.O.Box 5619911367, Ardabil, Iran\\
Email: adelpkazemi@yahoo.com
}

\thanks{{\scriptsize
\hskip -0.4 true cm MSC(2010): 
05C15; 05C69.
\newline Keywords: Total dominator chromatic number, total domination number, chromatic number.\\
}}

\maketitle

\begin{abstract}
Given a graph $G$, the total dominator coloring problem seeks a
proper coloring of $G$ with the additional property that every
vertex in the graph is adjacent to all vertices of a color class. We
seek to minimize the number of color classes. We study this problem
on several classes of graphs, as well as finding general bounds and
characterizations. We also show the relation between total dominator
chromatic number and chromatic number and total domination number.
\end{abstract}
\vskip 0.2 true cm


\pagestyle{myheadings}
\markboth{\rightline {\scriptsize  A. P. Kazemi}}
         {\leftline{\scriptsize Total dominator chromatic number in graphs}}

\bigskip
\bigskip


\section{\bf Introduction}
\vskip 0.4 true cm

All graphs considered here are finite, undirected and simple. For
standard graph theory terminology not given here we refer to \cite{West}. Let $%
G=(V,E) $ be a graph with the \emph{vertex set} $V$ of \emph{order}
$n(G)$ and the \emph{edge set} $E$ of \emph{size} $m(G)$. The
\emph{open neighborhood} and the \emph{closed neighborhood} of a
vertex $v\in V$ are $N_{G}(v)=\{u\in V\ |\ uv\in E\}$ and
$N_{G}[v]=N_{G}(v)\cup \{v\}$, respectively. The \emph{degree} of a
vertex $v$ is also $deg_G(v)=\mid N_{G}(v) \mid $. The
\emph{minimum} and \emph{maximum degree} of $G$ are denoted by
$\delta =\delta (G)$ and $\Delta =\Delta (G)$, respectively. If $\delta (G)=\Delta (G)=k$, then $G$ is called $k$-\emph{regular}. We say
that a graph is \emph{connected} if there is a path between every
two vertices of the graph, and otherwise is called
\emph{disconnected}. We write $K_{n}$, $C_{n}$ and $P_{n}$ for a \emph{complete graph}, a \emph{cycle} and a
\emph{path} of order $n$, respectively, while $G[S]$ and
$K_{n_1,n_2,...,n_p}$ denote the \emph{subgraph induced} of $G$ by a
vertex set $S$ of $G$ and the \emph{complete $p$-partite graph},
respectively. The \emph{complement} of a graph $G$ is denoted by $\overline{G}$
and is a graph with the vertex set $V(G)$ and for every two vertices
$v$ and $w$, $vw\in E(\overline{G})$\ if and only if $vw\not\in
E(G)$.

A \emph{total dominating set} (resp. \emph{dominating set}) $S$ of a graph $G$ is a subset of the vertices in $G$ such that for each vertex $v$, $N_G(v)\cap S\neq \emptyset$ (resp. $N_G[v]\cap S\neq \emptyset$). The
\emph{total domination number} $\gamma_t(G)$ (resp. \emph{domination number $\gamma(G)$}) of $G$ is the cardinality of a
minimum total dominating set (resp. dominating set). The topics has long been of interest to
researchers \cite{hhs1, hhs2}.

A \emph{proper coloring} of a graph $G =(V,E)$ is a function from
the vertices of the graph to a set of colors such that any two
adjacent vertices have different colors. The \emph{chromatic number}
$\chi (G)$ of $G$ is the minimum number of colors needed in a proper
coloring of a graph. In a proper coloring of a graph a \emph{color class} is the set of all same colored vertices of the graph.
Graph coloring is used as a model for a vast
number of practical problems involving allocation of scarce
resources (e.g., scheduling problems), and has played a key role in
the development of graph theory and, more generally, discrete
mathematics and combinatorial optimization. Graph $k$-colorability
is NP-complete in the general case, although the problem is solvable
in polynomial time for many classes \cite{GJ}.

A \emph{dominator coloring} of a graph $G$, briefly DC, is a proper coloring of $G$ such that every vertex of $V(G)$ dominates all
vertices of at least one color class (possibly its own class). The \emph{dominator chromatic
number} $\chi_d(G)$ of $G$ is the minimum number of color classes in a dominator coloring of $G$. As a consequence result we have $\chi (G) \leq \chi_d(G)$. The concept of dominator coloring was introduced recently by Gera et al. \cite{GHR} and studied
further in \cite{MM,Ger1,Ger2}. Here, we initiate to the study of a similar concept, total dominator coloring, in graphs.
\begin{defn}
\label{total dominator coloring} \emph{ A} total dominator coloring \emph{of a graph $G$, briefly TDC, is a proper
coloring of $G$ in which each vertex of the graph is adjacent to every
vertex of some color class. The }total dominator chromatic
number $\chi_d^t(G)$ \emph{of $G$ is the minimum number of color classes in a
total dominator coloring of $G$. A }$\chi_d^t(G)$-coloring \emph{of
$G$ is any total dominator coloring with $\chi_d^t(G)$ colors}.
\end{defn}
If $f$ is a total dominator coloring or a proper coloring of $G$
with the coloring classes $V_1$, $V_2$, ..., $V_{\ell}$ such that every vertex in $V_i$ has color $i$, we write
simply $f=(V_1,V_2,...,V_{\ell})$. In the following two definitions $f=(V_1,V_2,...,V_{\ell})$ is a total dominator coloring of $G$.
\begin{defn}
\label{common neighborhood}
\emph{A vertex $v$ is called a }common neighbor \emph{of $V_i$ if $v\succ V_i$, that is, $v$ is adjacent to all vertices in $V_i$. The set of all common neighbors of $V_i$ is called the }common neighborhood \emph{of $V_i$ in $G$ and denoted by $CN_G(V_i)$ or simply $CN(V_i)$.}
\end{defn}
\begin{defn}
\label{private neighborhood}
\emph{A vertex $v$ is called the} private neighbor of $V_i$ with respect to \emph{$f$ if $v\succ V_i$ and $v\nsucc V_j$ for all $j\neq i$. The set of all private neighbors of $V_i$ is called the} private neighborhood \emph{of $V_i$ in $G$ and denoted by $pn_G(V_i;f)$ or simply $pn(V_i;f)$.}
\end{defn}

The following proposition can be easily proved by Definitions \ref{total dominator coloring} and \ref{common neighborhood}.
\begin{prop}
\label{U(CN_(V_i))=V(G)} Let $f=(V_1,V_2,...,V_{\ell})$ be a total dominator coloring of $G$, and let $I=\{i\mbox{ }|\mbox{ } |V_i|\leq \Delta(G)\}$. Then $V(G)=\cup_{i\in I}CN_G(V_i)$.
\end{prop}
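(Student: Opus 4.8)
The inclusion $\bigcup_{i\in I}CN_G(V_i)\subseteq V(G)$ is immediate, so the plan is to establish the reverse inclusion $V(G)\subseteq\bigcup_{i\in I}CN_G(V_i)$ by taking an arbitrary vertex $v\in V(G)$ and exhibiting an index $i\in I$ with $v\in CN_G(V_i)$. First I would invoke the defining property of a total dominator coloring (Definition \ref{total dominator coloring}): since $f$ is a TDC, $v$ is adjacent to every vertex of some color class, say $V_i$; equivalently $v\succ V_i$, which by Definition \ref{common neighborhood} says $v\in CN_G(V_i)$. The only thing left is to check that this particular $i$ lies in $I$, i.e.\ that $|V_i|\le\Delta(G)$.

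For that step I would argue as follows. Because $G$ is simple, no vertex is adjacent to itself, so from $v\succ V_i$ we get $v\notin V_i$; hence every vertex of $V_i$ is a genuine neighbour of $v$, i.e.\ $V_i\subseteq N_G(v)$. Taking cardinalities, $|V_i|\le |N_G(v)|=\deg_G(v)\le\Delta(G)$, so $i\in I$ and therefore $v\in\bigcup_{i\in I}CN_G(V_i)$. Since $v$ was arbitrary, $V(G)\subseteq\bigcup_{i\in I}CN_G(V_i)$, and combined with the trivial reverse inclusion this gives the claimed equality.

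I do not expect any real obstacle here: the statement is essentially an unwinding of the definitions together with the observation $V_i\subseteq N_G(v)\Rightarrow |V_i|\le\Delta(G)$. The one subtlety worth stating explicitly in the write-up is why $v\notin V_i$ (simpleness of $G$), since that is exactly what lets us pass from ``$v$ dominates $V_i$'' to ``$V_i$ is contained in the open neighbourhood of $v$'' and hence to the degree bound that places $i$ in $I$.
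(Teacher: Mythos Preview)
Your proposal is correct and matches the paper's approach: the paper does not give an explicit proof but simply remarks that the proposition ``can be easily proved by Definitions \ref{total dominator coloring} and \ref{common neighborhood},'' which is precisely the definition-unwinding argument you have written out.
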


In this paper, we study the total dominator chromatic
number on several classes of graphs, as well as finding general bounds and
characterizations. We show also its relationship with chromatic number and total domination number.

The next known result is useful for our investigations.

\begin{prop}
\label{gmmma_t(G square H)=<min(gamma_t G.n,gamma_t H.m)} \emph{(\textbf{Kazemi, Pahlavsay \cite{Kaz.Pah} 2012)}} Let $G$ and $H$ be two graphs without isolated vertices. Then $\gamma _t(G\Box H)\leq \min\{\gamma _t(G)|V(H)|,\gamma _t(H) |V(G)|\}$.
\end{prop}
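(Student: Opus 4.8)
The plan is to exhibit explicit total dominating sets of $G\Box H$ whose sizes realize the two quantities inside the minimum, and then keep whichever is smaller. Since the Cartesian product is commutative, it suffices to prove $\gamma_t(G\Box H)\le \gamma_t(G)\,|V(H)|$; the companion bound $\gamma_t(G\Box H)\le \gamma_t(H)\,|V(G)|$ then follows by interchanging the roles of $G$ and $H$.

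To carry this out, let $D$ be a minimum total dominating set of $G$, so that $|D|=\gamma_t(G)$; such a set exists because $G$ has no isolated vertex. I claim that $S=D\times V(H)$ is a total dominating set of $G\Box H$. Fix an arbitrary vertex $(u,v)$ of $G\Box H$. Since $D$ totally dominates $G$, there is a vertex $u'\in D$ with $u'\in N_G(u)$, i.e.\ $uu'\in E(G)$. By the definition of the Cartesian product, $(u,v)$ is adjacent to $(u',v)$, and $(u',v)\in D\times V(H)=S$. Hence every vertex of $G\Box H$ has a neighbour in $S$, so $S$ is a total dominating set, and therefore $\gamma_t(G\Box H)\le |S|=|D|\,|V(H)|=\gamma_t(G)\,|V(H)|$. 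Applying the symmetric argument with a minimum total dominating set of $H$ gives $\gamma_t(G\Box H)\le\gamma_t(H)\,|V(G)|$, and combining the two inequalities yields the claimed bound.

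There is essentially no serious obstacle in this argument; the one point that deserves care is that the neighbour $u'$ supplied by \emph{total} domination lies in $D$ itself, which is precisely what makes $(u',v)\in S$. (If one only worked with an ordinary dominating set $D$, the projection argument would break down for a vertex $u\in D$ having no neighbour inside $D$.) The hypothesis that both $G$ and $H$ are without isolated vertices is used to guarantee that $\gamma_t(G)$ and $\gamma_t(H)$ are defined, and it incidentally ensures $G\Box H$ has no isolated vertex either, so $\gamma_t(G\Box H)$ makes sense.
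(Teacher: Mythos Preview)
Your argument is correct and is exactly the natural construction: lift a minimum total dominating set of one factor into each fibre of the product. Note, however, that the paper does not supply its own proof of this proposition; it is quoted as a known result from \cite{Kaz.Pah} and stated without argument, so there is nothing in the present paper to compare your proof against.
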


\section{\bf Complexity}
\vskip 0.4 true cm

In this section we formally establish the difficulty of finding the total dominator coloring number of an arbitrary graph. First we define some relevant decision problems.\\

\textsc{chromatic number} Given a graph $G$ and a positive integer $k$, does there exist a function
$f : V(G)\rightarrow \{1, 2, ..., k\}$ such that $f(u)\neq f(v)$ whenever $uv\in E(G)$?\\

\textsc{total dominator chromatic number} Given a graph $G$ and a positive integer $k$, does there
exist a function $f : V(G)\rightarrow \{1, 2, ..., k\}$ such that $f(u)\neq f(v)$ whenever $uv\in E(G)$ and
for any vertex $x\in V(G)$ there exists a color $i$ such that $\{y\in V(G) | f(y) = i\}\subseteq N(x)$?

\begin{thm}
\label{Total dominator chromatic number is NP-complete} \textsc{total dominator chromatic number} is NP-complete.
\end{thm}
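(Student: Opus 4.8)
The plan is to check membership in NP first and then prove NP-hardness by a reduction from \textsc{chromatic number}, which is NP-complete \cite{GJ}.

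For membership in NP, a certificate for a yes-instance $(G,k)$ is just a function $f\colon V(G)\to\{1,\dots,k\}$. Given $f$, one verifies in $O(m(G))$ time that $f$ is proper, and then, for each vertex $x$ and each colour $i$ actually used, tests whether the class $\{y : f(y)=i\}$ is contained in $N_G(x)$; this is $O(n(G)^2)$ work altogether. If every vertex passes this test, $f$ is a total dominator coloring with at most $k$ colours, so the problem is in NP.

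For hardness, given an instance $(G,k)$ of \textsc{chromatic number} with $n(G)\ge 1$ (the case $n(G)=0$ is trivial), construct $G'$ from $G$ by adding one new vertex $u$ joined to every vertex of $V(G)$, and set $k'=k+1$; this transformation is clearly computable in polynomial time. I claim $\chi(G)\le k$ if and only if $\chi_d^t(G')\le k+1$. The key observation is that $u$ is a universal vertex of $G'$, so in any proper coloring of $G'$ the colour of $u$ is shared by no other vertex; hence a total dominator coloring of $G'$ with $k+1$ colours restricts to a proper coloring of $G$ using at most $k$ colours, which gives one direction. Conversely, starting from a proper $k$-coloring of $G$ with classes $C_1,\dots,C_k$ (with, say, $C_1\neq\emptyset$ since $n(G)\ge 1$), assign $u$ a new $(k+1)$-st colour; the result is proper, every $v\in V(G)$ satisfies $v\succ\{u\}$ because $u$ is adjacent to all of $V(G)$, and $u\succ C_1$ because $C_1\subseteq V(G)=N_{G'}(u)$, so this is a total dominator coloring of $G'$ with $k+1$ colours. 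Combining the two directions with the NP membership above completes the proof.

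The only genuine design issue—and hence the step I expect to be the main obstacle—is choosing a gadget for which the total‑dominator requirement is satisfied "for free" in the reduced instance without changing the chromatic cost by more than the fixed additive constant. A single universal vertex does exactly this: it simultaneously serves as a dominated singleton colour class for every old vertex and, being adjacent to all old vertices, is itself dominated by any colour class inherited from $G$. A minor point to record is the convention, consistent with Definition \ref{total dominator coloring}, that colour classes are nonempty, so that an empty class cannot be used to satisfy the domination condition vacuously; this is what makes the ``only if'' direction rely essentially on $u$ being universal.
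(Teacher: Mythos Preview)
Your proof is correct and follows essentially the same approach as the paper: membership in NP is argued directly, and NP-hardness is shown by reducing \textsc{chromatic number} via the addition of a single universal vertex with $k'=k+1$. Your write-up is in fact a bit more careful than the paper's (explicit time bounds for verification, the observation that $n(G)\ge 1$ guarantees a nonempty class for $u$ to dominate), but the underlying construction and the two-directional argument are identical.
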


\begin {proof}
\textsc{total dominator chromatic number} is clearly in NP, since we can efficiently verify
that an assignment of colors to the vertices of $G$ is both a proper coloring and that every
vertex $v$ dominates some color class other than the color class of $v$.

Now we transform \textsc{chromatic number} to \textsc{total dominator chromatic number}. Consider
an arbitrary instance $(G,k)$ of \textsc{chromatic number}. Create an instance $(G',k')$ of
\textsc{total dominator chromatic number} as follows. Add a vertex $v'$ to $G$ and add an edge from
$v'$ to every vertex in $G$. Set $k' \rightarrow k+1$.

Suppose $G$ has a proper coloring using $k$ colors. Then the coloring of $G'$ that colors $v'$
with a new color is a proper coloring of $G'$. Since $v'\in N(u)$ for every $u\in V(G)$ and $\{u\in V(G) | f(u) = i\}\subseteq N(v')$ for some color $i$ (more exactly, for all colors $i$) other than the color of $v$, this coloring is a total dominator coloring, and is uses $k'=k+1$ colors.

Now suppose $G'$ has a total dominator coloring using $k'$ colors. Since $v'$ is adjacent to every
other vertex in $G'$, it must be the only vertex of its color in the hypothesized coloring. Then
the removal of $v'$ leaves a proper coloring of $G$ that uses $k'-1=k$ colors.
\end{proof}

\section{\bf Some bounds}
\vskip 0.4 true cm

In this section we will present some sharp lower and upper bounds for the total dominator chromatic number of a graph. First, we state the following observation.

\begin{obs}
\label{obs} Let $G$ be a graph of order $n$ and without isolated vertices. Then
\[
\max\{ \chi_d(G),\gamma_t(G)\} \leq \chi_d^t(G) \leq n.
\]
\end{obs}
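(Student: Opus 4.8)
The plan is to establish the three inequalities $\chi_d(G)\leq\chi_d^t(G)$, $\gamma_t(G)\leq\chi_d^t(G)$ and $\chi_d^t(G)\leq n$ independently, each by a short direct argument.

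First I would dispatch the upper bound. The coloring of $G$ that gives every vertex its own color uses $n$ colors and is obviously proper; since $G$ has no isolated vertex, every $v\in V(G)$ has a neighbor $w$, and the singleton class $\{w\}$ is then entirely adjacent to $v$, so this coloring is a TDC. Hence $\chi_d^t(G)\leq n$.

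Next, for $\gamma_t(G)\leq\chi_d^t(G)$, I would take a $\chi_d^t(G)$-coloring $f=(V_1,\ldots,V_\ell)$ with $\ell=\chi_d^t(G)$ and choose a representative $w_i\in V_i$ for each $i$, forming $S=\{w_1,\ldots,w_\ell\}$, so that $|S|\leq\ell$. For an arbitrary $v\in V(G)$, the defining property of a TDC gives an index $i$ with $v\succ V_i$; in particular $v$ is adjacent to $w_i\in S$, so $N_G(v)\cap S\neq\emptyset$. Thus $S$ is a total dominating set and $\gamma_t(G)\leq\ell=\chi_d^t(G)$. For $\chi_d(G)\leq\chi_d^t(G)$, I would simply note that a total dominator coloring is automatically a dominator coloring: if $v$ is adjacent to every vertex of a color class $V_i$ then $v$ dominates $V_i$ (since $N_G(v)\subseteq N_G[v]$), so the defining condition of a DC is met by the same class. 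Hence any $\chi_d^t(G)$-coloring is a dominator coloring with $\chi_d^t(G)$ colors, and $\chi_d(G)\leq\chi_d^t(G)$.

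I do not expect a genuine obstacle here; the only point requiring care is the hypothesis that $G$ has no isolated vertices, which is exactly what makes singleton color classes usable as dominated classes in the bound $\chi_d^t(G)\leq n$ and what guarantees that the chosen representatives are true neighbors in the bound $\gamma_t(G)\leq\chi_d^t(G)$. Indeed, for a graph with an isolated vertex no TDC exists at all, so the hypothesis cannot be dropped.
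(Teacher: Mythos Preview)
Your argument is correct. The paper states this as an observation without proof, and your three direct verifications---singleton colors for the upper bound, representatives of color classes forming a total dominating set, and the trivial implication TDC $\Rightarrow$ DC---are exactly the intended justifications.
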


The next theorem gives some lower and upper bounds for the total dominator chromatic number of a graph in terms of the total dominator chromatic numbers of its connected components.
\begin{thm}
\label{chi_d ^t,components} Let $G$ be a graph without isolated vertices. If $G_1$, $G_2$, ..., $G_{\omega}$ are all connected components of $G$, then
\[
\max_{1\leq i \leq \omega} \chi_d ^t(G_i) +2\omega -2 \leq \chi_d ^t(G) \leq \Sigma _{i=1}^{\omega} \chi_d ^t(G_i).
\]
\end{thm}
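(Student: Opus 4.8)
The plan is to prove the two inequalities separately, building a total dominator coloring of $G$ from colorings of the components and vice versa.

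For the upper bound $\chi_d^t(G)\le\sum_{i=1}^\omega\chi_d^t(G_i)$: take a $\chi_d^t(G_i)$-coloring $f_i$ of each component $G_i$, and recolor so that the color sets used by distinct components are pairwise disjoint. Define $f$ on $V(G)$ by $f|_{V(G_i)}=f_i$. Since there are no edges between components, $f$ is a proper coloring. For total domination: a vertex $v\in V(G_i)$ is, under $f_i$, adjacent to all vertices of some color class $C$ of $f_i$; that class $C$ is also a color class of $f$ (no other component uses that color), and $v$ is adjacent to all of $C$ in $G$ as well. Hence $f$ is a TDC of $G$ with $\sum_i\chi_d^t(G_i)$ colors.

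For the lower bound $\chi_d^t(G)\ge\max_i\chi_d^t(G_i)+2\omega-2$: start from a $\chi_d^t(G)$-coloring $f=(V_1,\dots,V_\ell)$ of $G$ and restrict it to a component $G_j$. The restriction is a proper coloring of $G_j$, but it need not be a TDC of $G_j$: a vertex $v\in V(G_j)$ dominates some color class $V_i$ of $G$, but $V_i$ may contain vertices outside $G_j$, in which case $V_i\cap V(G_j)$ is a proper subcolor that $v$ still dominates within $G_j$ — so after discarding the empty classes, the restriction $f_j$ is in fact a TDC of $G_j$ (each $v$ dominates $V_i\cap V(G_j)\ne\emptyset$ since $v$ has a neighbour in it, as $G_j$ has no isolated vertices and $v$ is adjacent to all of $V_i\cap V(G_j)$). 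The key counting point is then: how many colors of $f$ appear on $V(G_j)$? Call this number $c_j$; we have $c_j\ge\chi_d^t(G_j)$. The remaining task is to show $\sum_j$-type overcounting is not available but rather that, for the component $j$ attaining the maximum, $\ell\ge c_j+2(\omega-1)$, i.e. every \emph{other} component forces at least $2$ colors not seen on $G_j$. This is where the argument must use that each component is connected with no isolated vertex, hence $\chi_d^t(G_i)\ge 2$ for every $i$: one shows that for $i\ne j$ the component $G_i$ must contain at least two colors that do not occur on $V(G_j)$ — a single new color cannot suffice because a proper coloring of a graph with an edge needs $\ge2$ colors, and a color shared with $G_j$ cannot be "dominated from inside $G_i$" in a way that also lets $G_j$'s own coloring be extended, so one carefully tracks which color classes are usable as totally-dominated classes on each side.

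The main obstacle is the lower bound, specifically making precise the claim that each of the $\omega-1$ components other than the extremal one contributes at least two \emph{private} colors. The clean way is: let $S_j$ be the set of colors of $f$ used on $V(G_j)$. The sets $S_1,\dots,S_\omega$ cover $\{1,\dots,\ell\}$. If colors were shared freely across components the bound would fail, so the crux is to show that for $i\ne j$, $|S_i\setminus S_j|\ge 2$. I would argue this by contradiction: if $|S_i\setminus S_j|\le 1$, then almost all colors on $G_i$ also appear on $G_j$; using that $G_i$ has at least one edge (it is connected with $\ge 2$ vertices) and that the totally-dominated color class guaranteed for a vertex of $G_i$ must lie entirely inside a single component's color pattern, one derives that $f$ restricted to $G_i$ plus one extra color already gives a TDC of $G_i$ with $\le |S_j|-\text{(something)}+1$ colors, contradicting minimality of the global coloring relative to the maximum. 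The sharpness remark (that both bounds can be attained) I would support with small examples such as disjoint unions of paths or of complete graphs, but that is routine and I would defer it.
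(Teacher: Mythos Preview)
Your upper bound argument is correct and is exactly the paper's: combine optimal colorings of the components with disjoint palettes. For the lower bound your overall strategy also matches the paper's---restrict a $\chi_d^t(G)$-coloring $f$ to each component and count---and your observation that the restriction $f|_{G_i}$ is a TDC of $G_i$ (because any class dominated by $v\in V(G_i)$ lies in $N(v)\subseteq V(G_i)$, hence entirely inside $G_i$) is precisely the key point.

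However, your counting step has a genuine gap. Establishing $|S_i\setminus S_j|\ge 2$ for each $i\ne j$ is not sufficient: to conclude $\ell\ge |S_j|+2(\omega-1)$ you need the ``two new colors'' coming from distinct components $G_i$ ($i\ne j$) to be pairwise disjoint, and your sketched contradiction (``$f$ restricted to $G_i$ plus one extra color \dots\ contradicting minimality'') neither proves $|S_i\setminus S_j|\ge 2$ nor addresses disjointness across components. The clean fix is already implicit in your own observation. For each $i$ let $T_i$ be the set of colors $c$ with $V_c\subseteq V(G_i)$. Since a nonempty color class cannot lie in two components, the $T_i$ are pairwise disjoint, and for $i\ne j$ one has $T_i\cap S_j=\emptyset$ automatically. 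Moreover $|T_i|\ge 2$: pick $v\in V(G_i)$, let $c\in T_i$ be the color it dominates, pick $w\in V_c$, and let $c'\in T_i$ be the color $w$ dominates; then $c'\ne c$ since $w\in V_c$ but $w\notin V_{c'}$. Hence
\[
\ell \;\ge\; \Bigl|S_j\cup\bigcup_{i\ne j}T_i\Bigr| \;=\; |S_j|+\sum_{i\ne j}|T_i| \;\ge\; \chi_d^t(G_j)+2(\omega-1).
\]
The paper's own proof simply asserts ``we need at least two new colors for every $G_i$, $i\ne j$'' without further detail, so your instinct to justify this was correct; the mechanism should be the sets $T_i$ of \emph{globally monochromatic} colors confined to $G_i$, not the sets $S_i$ of colors merely appearing on $G_i$.
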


\begin{proof}
For $1\leq i \leq \omega$, let $f_i$ be a $\chi_d ^t$-coloring of $G_i$. Let $f$ be a function on $V(G)$ such that for any vertex $v\in V(G_i)$, $f(v)=(i,f_i(v))$. Then $f$ is a total dominating coloring of $G$, and so
$\chi_d ^t(G) \leq \Sigma _{i=1}^{\omega} \chi_d ^t(G_i)$.

Now let $\chi_d ^t(G_j)=\max_{1\leq i \leq \omega} \chi_d ^t(G_i)$, for some $1\leq j\leq \omega$.
Since we need to at least two new colors for coloring the vertices of every $G_i$, when $i\neq j$, we obtain
\[
\chi_d ^t(G) \geq \max_{1\leq i \leq \omega} \chi_d ^t(G_i) +2\omega -2.
\]
\end{proof}

In Theorem \ref{chi_d ^t,components}, we trivially see that
\[
\chi_d ^t(G)=\max_{1\leq i \leq \omega} \chi_d ^t(G_i) +2\omega -2
\]
if and only if at most one connected component of $G$ is not complete bipartite graph. Therefore, in continuation to our discussion, we assume that $G$ is a connected graph.

Next theorem present the lower bound 2 and the upper bound $n$ for the total dominator chromatic number of a connected graph of order $n$ which has no isolated vertex.

\begin{thm}
\label{2=<chi_d ^t=<n} If $G$ is a connected graph of order $n$ and without isolated vertices,
then $2\leq \chi_d ^t(G)\leq n$. Furthermore, $\chi_d ^t(G)$ is 2 or $n$ if and only if $G$ is a
complete bipartite graph, or is isomorphic to the complete graph $K_n$, respectively.
\end{thm}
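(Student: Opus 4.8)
The plan is to establish the two bounds first and then handle each equality characterization separately, proving both directions in each case.

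For the bounds themselves: the upper bound $\chi_d^t(G)\le n$ is immediate, since coloring every vertex with its own distinct color is a proper coloring in which each vertex, having at least one neighbor (no isolated vertices), is adjacent to the whole color class of that neighbor. For the lower bound $\chi_d^t(G)\ge 2$, note that a connected graph on $n\ge 2$ vertices has an edge, so any proper coloring uses at least two colors; and if $n=1$ the graph has an isolated vertex, which is excluded. So the routine part is quick.

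Next I would prove the characterization $\chi_d^t(G)=2 \iff G$ is complete bipartite. For the forward direction, suppose $f=(V_1,V_2)$ is a total dominator coloring with two colors. Properness forces every edge to go between $V_1$ and $V_2$, so $G$ is bipartite with parts contained in $V_1,V_2$; since $G$ has no isolated vertex each part is nonempty. The total dominator condition says every vertex is adjacent to all of $V_1$ or all of $V_2$; a vertex in $V_1$ cannot be adjacent to all of $V_1$ (it is not adjacent to itself, and $|V_1|\ge 1$ — more carefully, properness forbids edges inside $V_1$), so it must be adjacent to all of $V_2$, and symmetrically every vertex of $V_2$ is adjacent to all of $V_1$. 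Hence every vertex of $V_1$ is joined to every vertex of $V_2$, i.e. $G=K_{|V_1|,|V_2|}$. For the converse, $K_{a,b}$ with parts $A,B$ receives the coloring $(A,B)$: it is proper, every vertex of $A$ dominates all of $B$, every vertex of $B$ dominates all of $A$, and it uses $2$ colors, so with the lower bound $\chi_d^t=2$.

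Finally the characterization $\chi_d^t(G)=n \iff G\cong K_n$. The direction $K_n\Rightarrow \chi_d^t=n$ is clear since $\chi(K_n)=n\le\chi_d^t(K_n)\le n$ by Observation~\ref{obs} (or directly, all vertices must get distinct colors). The hard part — and the main obstacle — is the contrapositive: if $G\not\cong K_n$ (and $G$ is connected, no isolated vertex) then $\chi_d^t(G)\le n-1$. Here $G$ has two non-adjacent vertices $u,w$; the idea is to exhibit a total dominator coloring with at most $n-1$ colors by giving $u$ and $w$ the same color (legal since $uw\notin E$) and all other $n-2$ vertices distinct colors, for $n-1$ colors total. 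Properness is fine. For the total dominator property: every vertex $x\notin\{u,w\}$ is a singleton color class, and since $G$ is connected each such $x$ has a neighbor $y$; if $y\notin\{u,w\}$ then $x$ dominates the singleton $\{y\}$. The genuine difficulty is vertices whose only neighbors are $u$ and/or $w$ (including $u$ and $w$ themselves), since then the argument above may fail — e.g. if $x$'s unique neighbor is $u$, the class of $u$ is $\{u,w\}$ and $x\not\succ w$. I would handle this by a more careful choice: pick $u,w$ to be a non-adjacent pair for which this pathology is minimized, or split into cases according to whether some vertex has all its neighbors inside a prospective merged class, possibly merging a different pair or choosing $u,w$ so that neither is a cut vertex whose removal isolates a neighbor in the quotient. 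A clean way is: among all non-edges, the paper will likely argue that since $G\ne K_n$, either we can two-color some two vertices so that the star condition still holds for everyone, or else $G$ has a very rigid structure that itself forces $G=K_n$, a contradiction. I expect the write-up to make this choice explicitly and verify the total dominator condition case by case.
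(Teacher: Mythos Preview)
Your framework matches the paper's, and your treatment of the bounds and of the $\chi_d^t(G)=2$ characterization is essentially identical to it. The only real gap is exactly where you flag it: finishing the $\chi_d^t(G)=n$ direction after merging two non-adjacent vertices $u,w$ into one color class. A small correction first: $u$ and $w$ are never themselves problematic, since $uw\notin E(G)$ forces each of them to have a neighbor outside $\{u,w\}$, and that neighbor is a singleton class. The only possible obstruction is a third vertex $v$ with $N(v)\subsetneq\{u,w\}$, i.e.\ a leaf attached to $u$ or to $w$.

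The paper's resolution is simpler than you anticipate and does not involve any extremal choice of the non-edge: it splits on $\delta(G)$. If $\delta(G)\ge 2$, then every $v$ with $N(v)\subseteq\{u,w\}$ actually satisfies $N(v)=\{u,w\}$, so $v$ dominates the whole merged class $\{u,w\}$ and the obstruction evaporates --- the naive merge works for \emph{any} non-edge. If instead $\delta(G)=1$, the paper abandons the non-edge merge entirely: it takes a leaf $x$ (with unique neighbor $y$) and recolors $x$ with a color already in use (chosen different from $f(y)$, possible since $n\ge 3$), yielding a TDC with $n-1$ colors. So the missing idea is precisely this two-case split on minimum degree, which dissolves the difficulty you were circling.
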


\begin{proof}
Observation \ref{obs} implies $\chi_d ^t(G)\geq
\gamma_t(G)$, and since the total domination number of any graph
is at least 2, we obtain $2\leq \chi_d ^t(G)\leq n$.

If $G$ is a complete bipartite graph or is isomorphic to the complete graph $K_n$, then, obviously, $\chi_d
^t(G)=2$ or $\chi_d^t(G)=n$, respectively. Now let $\chi_d ^t(G)=2$, and let $f:V(G)\rightarrow
\{1,2\}$ be a $\chi_d ^t(G)$-coloring. If $V_i=\{v\in V(G) \mid
f(v)=i\}$, for $i=1,2$, then $G$ is the complete bipartite graph
with the vertex partition $V(G)=V_1\cup V_2$.

In the second case, we assume that $G$ is not isomorphic to the complete graph $K_n$, and $\chi_d^t(G)=n$.
Let $f$ be a $\chi_d^t(G)$-coloring. Without loss of generality, we may assume that $n\geq 3$. If $deg_G(x)=1$, for some vertex $x$, then by choosing $\alpha $ as an arbitrary element in $\{1,2,3,...,n\}-\{f(x)\}$, for each vertex $v$ we define
\begin{equation*}
g(v)=\left\{
\begin{array}{ll}
f(v) & \mbox{if }v\neq x, \\
\alpha & \mbox{if }v=x.
\end{array}
\right.
\end{equation*}
Thus $g$ is a total dominator coloring of $G$ with $n-1$ color classes, and so $\chi_d^t(G)<n$, a contradiction. Therefore, we may assume $\delta (G)\geq 2$. Now let $u$ and $u'$ be two non-adjacent vertices in $G$. Then the function $h$ on $V(G)$ with definition
\begin{equation*}
h(v)=\left\{
\begin{array}{ll}
f(v) & \mbox{if }v\neq u, \\
f(u') & \mbox{if }v=u,
\end{array}
\right.
\end{equation*}
is a total dominator coloring of $G$ with $n-1$ color classes, and so $\chi_d^t(G)<n$, a contradiction.
Therefore, $G$ is isomorphic to the complete graph $K_n$.
\end{proof}

Let $S$ be an independent vertex set in a graph $G=(V,E)$ such that the induced subgraph $G[V-S]$ has no isolated vertex or every isolated vertex in it is adjacent to all vertices in $S$. Let $\alpha_0(G)$ be the maximum cardinality of such a set in $G$. With this definition and notation we state following.

\begin{thm}
\label{chi_d^t(G)=<n+1-alpha_0} Let $G$ be a connected graph of order $n$ and without isolated vertices. Then
\[
\chi_d^t(G) \leq n+1- \alpha_0(G).
\]
\end{thm}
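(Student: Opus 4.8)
The plan is to construct an explicit total dominator coloring of $G$ using at most $n+1-\alpha_0(G)$ colors. Start by fixing a maximum independent set $S$ of the special kind in the definition, so $|S| = \alpha_0(G)$ and the induced subgraph $G[V-S]$ has no isolated vertex, except that any isolated vertex of $G[V-S]$ is adjacent to every vertex of $S$. The natural coloring to try is: give all of $S$ a single common color (legal, since $S$ is independent), and give each of the remaining $n - \alpha_0(G)$ vertices its own distinct private color. This uses $1 + (n-\alpha_0(G)) = n+1-\alpha_0(G)$ colors and is clearly proper, since the only repeated color is on the independent set $S$. It remains to check that this is a total dominator coloring, i.e. every vertex is adjacent to all vertices of some color class.

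The verification splits according to which vertex we are dominating. For a vertex $v \in S$: since $G$ is connected and $S$ is independent, $v$ has a neighbor $u \in V-S$; that neighbor is a singleton color class $\{u\}$, and $v \succ \{u\}$, so $v$ dominates $u$'s color class. For a vertex $w \in V-S$ that is \emph{not} isolated in $G[V-S]$: it has a neighbor $w' \in V-S$, again a singleton class, and $w \succ \{w'\}$. For a vertex $w \in V-S$ that \emph{is} isolated in $G[V-S]$: by the defining property of $S$, $w$ is adjacent to every vertex of $S$, hence $w \succ S$ and $w$ dominates the color class $S$ (here we use $|S|\geq 1$, which holds as $\alpha_0(G)\geq 1$ for a graph without isolated vertices — e.g. any single vertex whose complement-side has no isolated vertex, or more carefully one can note $\alpha_0(G)$ is well-defined and at least $1$). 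This covers all cases, so the coloring is a TDC with $n+1-\alpha_0(G)$ color classes, giving $\chi_d^t(G) \le n+1-\alpha_0(G)$.

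The only genuinely delicate point is confirming that $\alpha_0(G) \ge 1$ so that the color class $S$ is nonempty and the argument in the isolated-vertex case is not vacuous; one should check that for every connected graph on $n\ge 2$ vertices without isolated vertices there exists at least one independent set $S$ with the required property (a single vertex of minimum degree, or one endpoint of an edge, typically works after verifying the condition on isolated vertices of $G[V-S]$). Everything else is a routine case analysis as above, and the bound is stated to be sharp, which one would illustrate with small examples such as complete bipartite graphs where $\alpha_0$ equals the larger part and the bound becomes an equality.
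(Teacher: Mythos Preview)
Your proof is correct and follows exactly the same construction as the paper: give each vertex of $V-S$ its own color and give all of $S$ one additional color. You actually supply more detail than the paper does, since the paper simply asserts that this coloring is a TDC without carrying out the case analysis, whereas you verify it explicitly (and your side remark that $\alpha_0(G)\geq 1$ is correct: for any single vertex $v$, every isolated vertex of $G[V-\{v\}]$ must be adjacent to $v$ because $G$ has no isolated vertices).
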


\begin{proof}
Let $S$ be an independent vertex set in $G$ such that the induced subgraph $G[V(G)-S]$ has no isolated vertex or every isolated vertex in it is adjacent to all vertices of $S$ and $\mid S \mid=\alpha_0(G)$. We assign $n-\alpha_0(G)$ colors to $n-\alpha_0(G)$ vertices in $G[V(G)-S]$, and then assign ($n-\alpha_0(G)+1$)-th color to all vertices in $S$. This is a total dominator coloring of $G$, and so $\chi_d^t(G) \leq n+1- \alpha_0(G)$.
\end{proof}

\begin{cor}
\label{chi_d^t(G) =<n+1- alpha(G)} Let $G$ be a connected $k$-regular graph of order $n$ and without isolated vertices. If $\alpha(G)=k$, then
\[
\chi_d^t(G) \leq n+1- \alpha(G).
\]
\end{cor}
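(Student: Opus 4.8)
The plan is to derive this corollary directly from Theorem~\ref{chi_d^t(G)=<n+1-alpha_0} by showing that under the stated hypotheses one has $\alpha_0(G) \geq \alpha(G)$, which immediately gives $\chi_d^t(G) \leq n+1-\alpha_0(G) \leq n+1-\alpha(G)$. Since $\alpha_0(G)$ is by definition the maximum cardinality of an independent set $S$ for which $G[V(G)-S]$ has no isolated vertex, or every isolated vertex of it is adjacent to all of $S$, it suffices to exhibit one maximum independent set $S$ (so $|S|=\alpha(G)=k$) with this property.

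First I would take $S$ to be any maximum independent set of $G$, so $|S| = \alpha(G) = k$. The main step is to check the defining condition on $G[V(G)-S]$. Consider any vertex $v \in V(G)-S$. Because $G$ is $k$-regular, $\deg_G(v) = k$, and because $S$ is a maximal independent set (being maximum), $v$ has at least one neighbor in $S$. I would then split into cases according to how many neighbors $v$ has in $S$: if $v$ has a neighbor in $V(G)-S$, then $v$ is not isolated in $G[V(G)-S]$ and there is nothing to check; if $v$ has no neighbor in $V(G)-S$, then all $k$ of its neighbors lie in $S$, and since $|S| = k$ this forces $N_G(v) = S$, i.e.\ $v$ is adjacent to every vertex of $S$ — which is exactly the alternative allowed in the definition of $\alpha_0(G)$. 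Hence $S$ witnesses $\alpha_0(G) \geq k = \alpha(G)$, and the corollary follows from Theorem~\ref{chi_d^t(G)=<n+1-alpha_0}.

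The step requiring the most care is the case analysis for isolated vertices of $G[V(G)-S]$: one must use $k$-regularity together with $|S| = k$ to conclude that an isolated vertex's neighborhood is forced to be exactly $S$, and also invoke the maximality of a maximum independent set to guarantee every vertex outside $S$ has at least one neighbor in $S$ (so that it cannot have fewer than the needed structure). There is no serious obstacle here; the only thing to be attentive to is that the connectedness and absence of isolated vertices in $G$ are used implicitly so that Theorem~\ref{chi_d^t(G)=<n+1-alpha_0} applies, and that $\alpha(G) = k$ is precisely the numerical coincidence that makes an all-neighbors-in-$S$ vertex automatically satisfy the "adjacent to all vertices of $S$" clause.
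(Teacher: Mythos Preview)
Your proposal is correct and follows exactly the route the paper intends: the corollary is stated immediately after Theorem~\ref{chi_d^t(G)=<n+1-alpha_0} with no separate proof, so the implicit argument is precisely to check that a maximum independent set $S$ with $|S|=\alpha(G)=k$ satisfies the defining condition for $\alpha_0(G)$, which you do via the $k$-regularity and $|S|=k$. One minor remark: the observation that every $v\notin S$ has at least one neighbor in $S$ (from maximality) is not actually needed for the case analysis---if $v$ is isolated in $G[V(G)-S]$ then all $k$ neighbors lie in $S$, and $|S|=k$ forces $N_G(v)=S$ regardless.
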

Next theorem present a sharp upper bound for the total dominator chromatic number of a connected graph in terms of its total domination number and the chromatic number of an induced subgraph of it.
\begin{thm}
\label{chi_d^t =<g_t+min{chi(G-S)}} Let $G$ be a connected graph without
isolated vertices. Then
\[
\chi_d^t(G) \leq \gamma_t(G)+ \min_S \chi(G[V(G)-S]),
\]
where $S\subseteq V(G)$ is a $\gamma_t(G)$-set. Also this upper bound is sharp.
\end{thm}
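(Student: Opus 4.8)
The plan is to exhibit an explicit total dominator coloring that meets the bound. Fix a $\gamma_t(G)$-set $S$ for which $\chi(G[V(G)-S])$ attains its minimum over all $\gamma_t(G)$-sets, and put $t=\chi(G[V(G)-S])$. Assign to the $\gamma_t(G)$ vertices of $S$ the $\gamma_t(G)$ pairwise distinct colors $1,2,\dots,\gamma_t(G)$ (one color per vertex), and then color $G[V(G)-S]$ properly using $t$ brand-new colors $\gamma_t(G)+1,\dots,\gamma_t(G)+t$; call the resulting function $f$. (If $V(G)=S$ this degenerates, $t=0$, and $f$ is just the all-distinct coloring, which is clearly a TDC.)

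Next I would verify that $f$ is a total dominator coloring with $\gamma_t(G)+t$ colors. Properness: two adjacent vertices of $S$ get distinct colors since all vertices of $S$ are colored distinctly; two adjacent vertices of $V(G)-S$ get distinct colors because we used a proper coloring of $G[V(G)-S]$; and an edge between $S$ and $V(G)-S$ is never monochromatic because the two color palettes are disjoint. For the total-domination-by-color-class property: every vertex $v\in V(G)$ has a neighbor $s\in S$ because $S$ is a \emph{total} dominating set (so $N_G(v)\cap S\neq\emptyset$); since the color class of $s$ under $f$ is the singleton $\{s\}$ and $v$ is adjacent to $s$, vertex $v$ is adjacent to every vertex of that color class. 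Hence $\chi_d^t(G)\le \gamma_t(G)+t$, and because $S$ was chosen to minimize $\chi(G[V(G)-S])$, this is exactly $\gamma_t(G)+\min_S\chi(G[V(G)-S])$.

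For sharpness I would use $G=K_n$ with $n\ge 2$: here $\gamma_t(K_n)=2$, and every $2$-element subset $S$ induces $G[V(G)-S]=K_{n-2}$ with chromatic number $n-2$, so the right-hand side equals $2+(n-2)=n$, which is $\chi_d^t(K_n)$ by Theorem \ref{2=<chi_d ^t=<n}. There is no deep obstacle in this argument; the only points that require care are that the colors used on $S$ must be kept disjoint from those used on $V(G)-S$ (otherwise both properness and the singleton-class argument can break), and that the hypothesis of a \emph{total} dominating set — rather than an ordinary dominating set — is precisely what forces each vertex to be adjacent to, not merely to lie in the closed neighborhood of, some color class.
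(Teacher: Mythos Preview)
Your proof is correct and follows essentially the same approach as the paper: choose a $\gamma_t(G)$-set $S$ minimizing $\chi(G[V(G)-S])$, give each vertex of $S$ its own private color, and properly color $V(G)-S$ with a disjoint palette; the paper's sharpness example is likewise $K_n$. Your write-up is in fact slightly more careful than the paper's in verifying properness and in noting why a \emph{total} dominating set is needed.
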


\begin{proof}
Let $\ell=\min\{\chi (G[V(G)-S]) \mid \mbox{ } S \mbox{ is a }
\gamma_t(G) \mbox{-set}\}$, and let $D=\{v_1,v_2,...,v_m\}$
be a $\gamma_t(G)$-set such that $\chi(G[V(G)-D])=\ell$. Let also
$f: V(G)-D \rightarrow \{1,2,...,\ell\}$ be a proper coloring of
$G[V(G)-D]$. We define $g: V(G) \rightarrow \{1,2,3,...,\ell+m\}$ such that
\begin{equation*}
g(v)=\left\{
\begin{array}{ll}
\ell+i & \mbox{if }v=v_i \in D, \\
f(v) & \mbox{if }v\not \in D.
\end{array}
\right.
\end{equation*}
Since $D$ is a total dominating set of $G$, $g$ will be a total dominator coloring of $G$. Hence
\[
\chi_d^t(G) \leq m + \ell = \gamma_t(G)+\min\{\chi
(G[V(G)-S]) \mid \mbox{ } S \mbox{ is a } \gamma_t(G) \mbox{-set}
\}.
\]

This upper bound is sharp. For example, if $K_n$ is the complete graph of
order $n\geq 3$, then
\begin{equation*}
\begin{array}{lll}
\chi_d^t(K_n)& = & n \\
& = & \gamma _{t}(K_n)+ \chi_d^t(K_{n-2}) \\
& = & \gamma _{t}(K_n)+ \min\{\chi (K_n[V-S]) \mid \mbox{ } S \mbox{
is a } \gamma_t(K_n) \mbox{-set} \}.
\end{array}%
\end{equation*}%
Also it can be verified that this bound is sharp for the complete $p$-partite graph $K_{1,1,n_1,...,n_{p-2}}$, where $p\geq 3$, and for any wheel $W_n$, where $n\geq 3$ is odd (see Proposition \ref{chi_dt W_n}).
\end{proof}

\begin{cor}
\label{chi_d^t =<g_t+p} If $G$ is a connected $p$-partite graph without
isolated vertices, then
\[
\chi_d^t(G) \leq \gamma_t(G)+ p.
\]
\end{cor}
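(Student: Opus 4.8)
The plan is to derive this directly from Theorem \ref{chi_d^t =<g_t+min{chi(G-S)}}, which already gives $\chi_d^t(G) \leq \gamma_t(G) + \min_S \chi(G[V(G)-S])$ with $S$ ranging over the $\gamma_t(G)$-sets of $G$. So it suffices to bound $\min_S \chi(G[V(G)-S])$ from above by $p$, and for that a single arbitrary $\gamma_t(G)$-set will do.

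First I would recall that $G$ being $p$-partite means exactly that $\chi(G) \leq p$: a partition of $V(G)$ into $p$ independent sets is a proper coloring with $p$ colors. Next I would use the elementary monotonicity of the chromatic number under taking induced subgraphs: if $H$ is an induced subgraph of $G$, then any proper coloring of $G$ restricts to a proper coloring of $H$, so $\chi(H) \leq \chi(G)$. Applying this with $H = G[V(G)-S]$ for a fixed $\gamma_t(G)$-set $S$ gives $\chi(G[V(G)-S]) \leq \chi(G) \leq p$, hence $\min_S \chi(G[V(G)-S]) \leq p$.

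Combining these two observations with Theorem \ref{chi_d^t =<g_t+min{chi(G-S)}} yields
\[
\chi_d^t(G) \leq \gamma_t(G) + \min_S \chi(G[V(G)-S]) \leq \gamma_t(G) + p,
\]
which is the claimed bound. There is essentially no obstacle here; the only thing to be slightly careful about is that $G[V(G)-S]$ may contain isolated vertices or even be empty, but neither affects the chromatic number inequality, and Theorem \ref{chi_d^t =<g_t+min{chi(G-S)}} has already been proved without any such hypothesis on the induced subgraph, so the corollary follows immediately.
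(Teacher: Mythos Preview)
Your proof is correct and is exactly the intended derivation: the paper states this as an immediate corollary of Theorem~\ref{chi_d^t =<g_t+min{chi(G-S)}} without a separate proof, and your argument---bounding $\chi(G[V(G)-S]) \leq \chi(G) \leq p$ and plugging into the theorem---is precisely how it follows.
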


The next result gives another upper bound for a connected $p$-partite graph.

\begin{thm}
\label{chi_d ^t(p-partite)=<n-n'+1} Let $G$ be a connected $p$-partite graph of order $n$. Let $n_1$, $n_2$, ..., $n_p$ be the cardinality of the $p$-partite sets of $G$. If $\delta(G)\geq n_i$, for some $i$, then $\chi_d^t(G)\leq n-n'+1$, where $n'=\max\{n_i|\delta(G)\geq n_i\}$.
\end{thm}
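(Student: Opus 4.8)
The plan is to produce an explicit total dominator coloring of $G$ that uses exactly $n-n'+1$ colors. Fix an index $i$ with $n_i=n'$ and $\delta(G)\ge n'=n_i$; such an index exists by hypothesis together with the definition of $n'$ as $\max\{n_j\mid\delta(G)\ge n_j\}$. Let $V_i$ be the corresponding partite set, so that $|V_i|=n'$ and, because $G$ is $p$-partite, $V_i$ is an independent set. I would color every vertex of $V_i$ with color $1$ and then color the $n-n'$ vertices of $V(G)-V_i$ with the distinct colors $2,3,\dots,n-n'+1$, one color per vertex.

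Checking that this is a proper coloring is immediate: the unique color class of size greater than one is $V_i$, which is independent, and all other classes are singletons. The point that requires an argument is that every vertex $v\in V(G)$ is adjacent to all vertices of some color class. Here I would distinguish two cases. If $v$ has a neighbor $u\notin V_i$, then the singleton class $\{u\}$ lies inside $N_G(v)$, so $v$ totally dominates it. Otherwise $N_G(v)\subseteq V_i$, and then the degree hypothesis gives $|N_G(v)|=deg_G(v)\ge\delta(G)\ge n'=|V_i|$, which forces $N_G(v)=V_i$; hence $v$ is adjacent to every vertex of the color class $V_i$. In either case $v$ totally dominates a color class, so the coloring is a TDC and $\chi_d^t(G)\le n-n'+1$.

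I do not expect a genuine obstacle here; the one place where an observation is needed rather than mere bookkeeping is the second case, where the inclusion $N_G(v)\subseteq V_i$ must be upgraded to the equality $N_G(v)=V_i$ by comparing cardinalities using $\delta(G)\ge|V_i|$. It is worth noting in passing that the hypotheses force $n'<n$ (if $n'=n$ then $V_i=V(G)$ is independent, so $G$ is edgeless and hence, being connected, has order $1$, contradicting $\delta(G)\ge n_i=1$), so the described coloring is well defined and actually uses at least two colors; but this remark is not needed for the stated inequality.
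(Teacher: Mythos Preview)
Your proof is correct and follows the same construction as the paper: assign a single color to the partite set $V_i$ with $|V_i|=n'$ and give each remaining vertex its own color. You actually supply the verification that this coloring is a TDC (via the two cases on whether $v$ has a neighbor outside $V_i$), whereas the paper simply asserts it; your argument is the natural way to fill that gap.
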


\begin{proof}
Let $G$ be a connected $p$-partite graph of order $n$ with $V_1$, ..., $V_p$ as $p$ independent sets of $V(G)$ such that $|V_j|=n_j$, for $1 \leq j \leq p$. Let $n'=n_i$, for some $i$. Then the coloring that assigns colors 1, 2, ..., $n-n_i$ to the vertices of $V(G)-V_i$, and color $n-n_i+1$ to the vertices of $V_i$, is a TDC of $G$. Hence $\chi_d^t(G)\leq n-n'+1$.
\end{proof}

We notice that if a graph $G$ has a $\chi_d^t$-coloring $f$ without singleton color class, then $f$ is also a dominator coloring of $G$, and hence $\chi_d^t(G)=\chi_d(G)$. Next proposition shows that this condition is not necessary for $\chi_d^t(G)=\chi_d(G)$.

\begin{prop}
\label{chi_d^t =chi, Delta=n-1} Let $G$ be a connected graph of order $n$ and without isolated
vertices. If $\Delta (G)=n-1$, then $\chi_d^t(G)=\chi_d(G)=\chi(G)$.
\end{prop}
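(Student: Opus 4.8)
The plan is to sandwich $\chi_d^t(G)$ between $\chi(G)$ and itself using inequalities already available, so the only genuine work is the upper bound $\chi_d^t(G)\le\chi(G)$. First I would recall the two free inequalities: from the introduction, $\chi(G)\le\chi_d(G)$, and from Observation \ref{obs} (which applies since $G$ has no isolated vertex), $\chi_d(G)\le\chi_d^t(G)$. Hence $\chi(G)\le\chi_d(G)\le\chi_d^t(G)$, and it suffices to produce a total dominator coloring of $G$ using exactly $\chi(G)$ colors.

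Next I would fix a vertex $w$ with $deg_G(w)=\Delta(G)=n-1$, so $w$ is adjacent to every other vertex of $G$, and take an arbitrary proper coloring $f$ of $G$ with $\chi(G)$ colors. The key observation is that, since $w$ is adjacent to all of $V(G)\setminus\{w\}$, the color class of $w$ under $f$ must be the singleton $\{w\}$. Now I would verify the total-domination condition vertex by vertex: every vertex $v\neq w$ is adjacent to $w$, hence adjacent to all vertices of the color class $\{w\}$; and $w$, being adjacent to every vertex of $G$, is adjacent to all vertices of any color class distinct from $\{w\}$. Such a class exists because $G$ is connected with $n\ge 2$, so it contains an edge and therefore $\chi(G)\ge 2$. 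Thus $f$ is a total dominator coloring of $G$, giving $\chi_d^t(G)\le\chi(G)$; combined with the chain above this yields $\chi_d^t(G)=\chi_d(G)=\chi(G)$.

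The argument is essentially immediate once one notices that a full-degree vertex forms its own color class in every proper coloring; the only point that needs a word of care is checking the condition for $w$ itself, which is precisely where the remark that $G$ is not edgeless (equivalently $\chi(G)\ge 2$) is used. I do not expect any real obstacle here.
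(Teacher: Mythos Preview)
Your proposal is correct and follows essentially the same argument as the paper: pick a vertex $w$ of degree $n-1$, observe that its color class in any $\chi(G)$-coloring is the singleton $\{w\}$, and check that this already makes the coloring a TDC, then finish with the chain $\chi(G)\le\chi_d(G)\le\chi_d^t(G)$ from Observation~\ref{obs}. The only cosmetic difference is that you spell out explicitly why $\{w\}$ is a singleton and why a second color class exists, whereas the paper simply asserts $V_1=\{v\}$ and writes $2\le i\le m$.
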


\begin{proof}
Let $f=(V_1,V_2,...,V_m)$ be a proper coloring of $G$, where
$m=\chi (G)$, and $V_1=\{v\}$ for some vertex $v$ of degree $n-1$. Then $w\succ V_1$ for each
vertex $w\in V(G)-V_1$. Also for each $2\leq i \leq m$, $v\succ
V_i$. Therefore $f$ is a total dominator coloring of $G$ with $\chi(G)$ color classes, and so $\chi_d^t(G)\leq \chi(G)$. Now
Observation \ref{obs} implies $\chi_d^t(G)=\chi_d(G)=\chi(G)$.
\end{proof}

\begin{cor}
\label{chi_d^t =chi, Delta=n-1, ell} Let $G$ be a connected graph of order $n$ and without
isolated vertices. If $\Delta(G)=n-1$ and $v_1$, ..., $v_{\ell}$ be all vertices of degree $n-1$, then
\[
\chi_d^t(G)=\ell+\chi(G[V-\{v_1,...,v_{\ell}\}]).
\]
\end{cor}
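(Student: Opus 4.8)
The plan is to reduce the statement to Proposition~\ref{chi_d^t =chi, Delta=n-1} applied to a carefully chosen induced subgraph, together with a splicing argument for the $\ell$ universal vertices. Write $W=\{v_1,\dots,v_\ell\}$ for the set of all vertices of degree $n-1$ in $G$, and let $H=G[V(G)-W]$ with $n(H)=n-\ell$. First I would record the trivial but useful facts: every vertex of $W$ is adjacent to every other vertex of $G$, so in any proper coloring of $G$ each $v_i$ must receive its own private color, whence $\chi_d^t(G)\ge \ell+\chi(H)$ is plausible as the lower bound and $\ell+\chi(H)$ as the upper bound; the equality is what we must pin down. Note also that $W$ induces a clique, and that $H$ has no isolated vertex whenever $\ell<n-1$ (if some $u\in V(H)$ were isolated in $H$, then $u$ would be adjacent only to vertices of $W$; but then... actually $u$ could have degree $n-1$ only if it is adjacent to all of $W$ and all of $V(H)\setminus\{u\}$, contradiction unless $V(H)=\{u\}$), so the degenerate cases $\ell\in\{n-1,n\}$, i.e.\ $G=K_n$, should be handled separately where the formula reads $n=\ell+\chi(\emptyset\text{ or }K_0)$.

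For the upper bound $\chi_d^t(G)\le \ell+\chi(H)$: take a proper coloring $g$ of $H$ with $\chi(H)$ colors and extend it by giving each $v_i$ a brand-new distinct color $\chi(H)+i$. This is clearly proper on $G$. To see it is a TDC, I would check that every vertex is adjacent to all of some color class: each singleton class $\{v_i\}$ is dominated by every vertex of $V(G)\setminus\{v_i\}$ since $\deg_G(v_i)=n-1$; and each $v_i$ itself is adjacent to all vertices in every color class other than its own (again because $v_i$ is universal). So every vertex sees some full class, giving the upper bound.

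For the lower bound $\chi_d^t(G)\ge \ell+\chi(H)$: let $f$ be any $\chi_d^t(G)$-coloring. Since each $v_i$ is universal, it is the unique vertex of its color, so $f$ uses $\ell$ distinct "singleton" colors on $W$. I would then argue that the restriction of $f$ to $V(H)$ is a proper coloring of $H$ using the remaining $\chi_d^t(G)-\ell$ colors, and that it uses at least $\chi(H)$ of them --- but that only gives $\chi_d^t(G)-\ell\ge\chi(H)$ trivially \emph{if} no color of $W$ also appears on $H$, which is automatic here. Hence $\chi_d^t(G)\ge\ell+\chi(H)$. Combining with the upper bound completes the proof, modulo the small-case bookkeeping. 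The main obstacle I anticipate is the edge cases where $H$ has isolated vertices or is empty (equivalently $G=K_n$ or $G$ has a near-universal structure): there the expression $\chi(G[V-W])$ must be interpreted correctly (an empty graph has chromatic number $0$) and the appeal to "$H$ has no isolated vertex" in Proposition~\ref{chi_d^t =chi, Delta=n-1} is unavailable, so I would dispatch those cases by hand, checking that $\chi_d^t(K_n)=n=\ell+0$ and similar, before invoking the general argument for the remaining graphs.
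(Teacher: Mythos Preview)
Your argument is correct. The paper gives no explicit proof: it records the statement as an immediate corollary of Proposition~\ref{chi_d^t =chi, Delta=n-1}, applied to $G$ itself (not to an induced subgraph), which yields $\chi_d^t(G)=\chi(G)$; the identity $\chi(G)=\ell+\chi(H)$ is then the elementary fact that each universal vertex forces its own color. Your direct upper/lower bound is exactly this computation unfolded, so the two routes coincide in substance.

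Two minor clean-ups. First, your opening plan ``apply Proposition~\ref{chi_d^t =chi, Delta=n-1} to a carefully chosen induced subgraph'' is not what you actually do, nor what the paper intends: the proposition is invoked for $G$, not for $H$. Second, the digression about whether $H$ has isolated vertices is unnecessary---your argument only uses $\chi(H)$, which is perfectly well defined (and equals $1$ on nonempty edgeless graphs, $0$ on the empty graph), so the cases $\ell=n-1$ and $\ell=n$ require no separate treatment beyond the convention $\chi(K_0)=0$.
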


\section{\bf The total dominator chromatic number of some graphs}
\vskip 0.4 true cm

Obviously, the total dominator chromatic number of every complete
$p$-partite graph is $p$. In this section we calculate this number
for some other classes of graphs.
\begin{prop}
\label{chi_dt W_n} Let $W_n$ be a wheel of order $n+1\geq 4$. Then
\begin{equation*}
\chi_d^t(W_n)=\left\{
\begin{array}{ll}
3 & \mbox{if }n \mbox{ is even}, \\
4 & \mbox{if }n \mbox{ is odd}.
\end{array}
\right.
\end{equation*}
\end{prop}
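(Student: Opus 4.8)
The plan is to observe that a wheel is, structurally, exactly the situation covered by Proposition \ref{chi_d^t =chi, Delta=n-1}. Write $W_n = K_1 + C_n$, with hub $c$ and rim cycle $v_1v_2\cdots v_nv_1$. The hub $c$ is adjacent to every rim vertex, so $\deg_{W_n}(c) = n = n(W_n)-1$, i.e. $\Delta(W_n) = n(W_n)-1$. Since $W_n$ is connected and has no isolated vertex, Proposition \ref{chi_d^t =chi, Delta=n-1} applies directly and yields $\chi_d^t(W_n) = \chi(W_n)$.

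It then remains to compute $\chi(W_n)$. Because $c$ is universal it must receive a colour distinct from all rim colours, so $\chi(W_n) = 1 + \chi(C_n)$. Using the classical fact that $\chi(C_n) = 2$ when $n$ is even and $\chi(C_n) = 3$ when $n$ is odd, we get $\chi(W_n) = 3$ for even $n$ and $\chi(W_n) = 4$ for odd $n$, which is the claimed formula. The boundary case $n = 3$, where $W_3 = K_4$, is consistent: $\chi_d^t(K_4) = 4$ by Theorem \ref{2=<chi_d ^t=<n}, and $n=3$ is odd.

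If one prefers an argument not invoking Proposition \ref{chi_d^t =chi, Delta=n-1}, the upper bounds come from explicit colourings: for even $n$, two-colour the rim alternately and give $c$ a third colour; for odd $n$, properly three-colour the rim and give $c$ a fourth colour. In each case the colouring is proper (the hub's colour is new, and the rim colouring is a proper colouring of $C_n$), every rim vertex is adjacent to all of the singleton class $\{c\}$, and $c$ is adjacent to all of any rim colour class since it dominates the entire rim; hence these are total dominator colourings. The matching lower bounds are $\chi_d^t(W_n) \ge \chi_d(W_n) \ge \chi(W_n)$ from Observation \ref{obs} and the inequality $\chi(G) \le \chi_d(G)$ recorded in the introduction.

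There is essentially no serious obstacle here: the whole content is recognizing that the hub makes $W_n$ a graph with $\Delta = n(W_n)-1$, so Proposition \ref{chi_d^t =chi, Delta=n-1} collapses the problem to the well-known chromatic number of wheels. The only points requiring care are confirming that the hub really is a vertex of degree $n(W_n)-1$ (so the hypothesis of that proposition is met) and recalling the distinction between the chromatic numbers of even and odd cycles.
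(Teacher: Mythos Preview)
Your argument is correct and follows essentially the same route as the paper: the paper invokes Corollary \ref{chi_d^t =chi, Delta=n-1, ell} (the immediate consequence of the proposition you cite) to obtain $\chi_d^t(W_n)=1+\chi(C_n)$, which is exactly what you derive via Proposition \ref{chi_d^t =chi, Delta=n-1} together with $\chi(W_n)=1+\chi(C_n)$. The additional explicit colourings and the direct lower bound you provide are not in the paper's proof but are sound and add nothing essentially new.
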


\begin{proof}
As a consequence of Corollary \ref{chi_d^t =chi, Delta=n-1, ell}, we have
\begin{equation*}
\begin{array}{lll}
\chi_d^t(W_n)& = 1+\chi(C_n) & \\
&=\left\{
\begin{array}{ll}
3 & \mbox{if }n \mbox{ is even}, \\
4 & \mbox{if }n \mbox{ is odd}.
\end{array}
\right.
\end{array}%
\end{equation*}%
\end{proof}

Notice that $\chi_d^t(W_n)=\chi_d(W_n)$, by \cite{Ger1}.


\begin{prop}
\label{chi_dt C_n} Let $C_n$ be a cycle of order $n\geq 3$. Then
\begin{equation*}
\chi_d^t(C_n)=\left\{
\begin{array}{ll}
2                                & \mbox{if }n=4, \\
4\lfloor \frac{n}{6}\rfloor +r   & \mbox{if } n\neq 4 \mbox{ and for }r=0,1,2,4,~~n\equiv r\pmod{6},\\
4\lfloor \frac{n}{6}\rfloor +r-1 & \mbox{if }~~n\equiv r\pmod{6} \mbox{, where }r=3,5.
\end{array}
\right.
\end{equation*}
\end{prop}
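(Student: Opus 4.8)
The plan is to compute $\chi_d^t(C_n)$ by establishing matching upper and lower bounds, treating the small cases and the residue classes modulo $6$ separately. First I would dispose of the tiny cases by hand: $C_3=K_3$ gives $\chi_d^t(C_3)=3$ (consistent with $n\equiv 3$), $C_4=K_{2,2}$ gives $\chi_d^t(C_4)=2$ by Theorem \ref{2=<chi_d ^t=<n}, $C_5$ and $C_6$ directly, and so on until the periodic pattern takes over. The underlying combinatorial fact driving everything is that in a cycle, a color class $V_i$ can be ``totally dominated'' by a vertex $v$ only if every vertex of $V_i$ is a neighbor of $v$; since $\deg_{C_n}(v)=2$, this forces $|V_i|\le 2$, and if $|V_i|=2$ then $V_i=\{v_{j-1},v_{j+1}\}$ is a pair of vertices at distance $2$ whose unique common neighbor is $v_j$. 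So the ``useful'' classes are singletons and antipodal-at-distance-two pairs, and a pair $\{v_{j-1},v_{j+1}\}$ only serves the single vertex $v_j$.

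For the upper bound I would exhibit an explicit periodic coloring with period $6$. On each block of six consecutive vertices I would use a pattern that contributes four colors: two ``pair'' classes of the form $\{v_{j-1},v_{j+1}\}$ arranged so their private neighbors $v_j$ are spread out, plus enough singleton classes so that those singletons are adjacent to some class and also serve as the totally-dominated class for the pair-endpoints. Concretely, a block pattern along six vertices roughly like $a,b,a',c,b',d$ where $\{a\text{-vertices}\}$, $\{b\text{-vertices}\}$ act as the distance-two pairs and $c,d$ (together with reuse across blocks) handle the remaining vertices; one checks every vertex is adjacent to all of some class. This yields $\chi_d^t(C_n)\le 4\lfloor n/6\rfloor + (\text{small correction depending on } r)$, and the claimed formula comes from optimizing the leftover $r$ vertices at the seam, where for $r=3,5$ one can save a color by merging a leftover singleton into an existing class.

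For the lower bound, which I expect to be the main obstacle, I would argue via a counting/covering argument: by Proposition \ref{U(CN_(V_i))=V(G)} every vertex of $C_n$ must lie in $CN(V_i)$ for some class $V_i$ with $|V_i|\le\Delta=2$, and I showed above that such a class covers at most $2$ vertices if it is a singleton ($CN$ of a singleton $\{v_j\}$ is $\{v_{j-1},v_{j+1}\}$) and at most $1$ vertex if it is a distance-two pair. Meanwhile the classes must also form a proper coloring, so the singletons cannot overlap the pairs freely. The crux is to show that any total dominator coloring of $C_n$ must ``waste'' roughly one class in six beyond the naive count — i.e., that you cannot have all classes be size-two pairs each covering two distinct vertices, because such pairs would have to tile the cycle and the properness/adjacency constraints make that impossible for $n\not\equiv 0\pmod 6$ without extra singleton classes. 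I would make this precise by a discharging-style local analysis on maximal runs of same-colored structure, or by setting up the inequality: if there are $s$ singleton classes and $p$ pair classes, then $s+2p=n$ (vertex count) forces a relation, while the covering condition gives $2s + p \ge n$ wait — rather $2\cdot(\text{singletons usable as dominated class}) \ge n - (\text{vertices covered by pairs})$; balancing these two linear constraints and minimizing $s+p$ yields $s+p \ge \lceil 2n/3\rceil$-type bounds that, refined by the mod-$6$ periodicity of the extremal configurations, give exactly $4\lfloor n/6\rfloor + r$ or $4\lfloor n/6\rfloor + r - 1$. Verifying that the extremal configuration genuinely has period $6$ and matching the boundary corrections for $r=1,2,3,4,5$ is the delicate bookkeeping I would need to push through carefully.
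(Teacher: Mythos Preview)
Your upper-bound plan---an explicit period-$6$ coloring with a short patch for each residue $r$---is essentially what the paper does: it records two canonical four-color patterns on six consecutive vertices, namely $a,b,a,b,c,d$ and $a,b,a,c,d,c$, and then in six separate cases ($r=0,1,\dots,5$) glues copies together with a tail of $r$ vertices. So on the constructive side you and the paper agree.

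The lower bound is where your approach diverges from the paper's and where your proposal has a real gap. The paper does \emph{not} run a global count of singleton versus pair classes; instead it argues locally that any six consecutive vertices must carry at least four distinct colors in any TDC (since a repeated color within the block forces the pattern $a,b,a$ on three consecutive vertices, after which two further new colors are unavoidable), and then uses this block bound together with explicit constructions to pin down the minimum in each residue class. Your global linear-programming idea, by contrast, does not close as written. You set $s+2p=n$, but this presupposes that every color class is a singleton or a distance-$2$ pair, which is false: a TDC may contain large ``junk'' classes that no vertex dominates, provided each of their members dominates some other small class. Once that spurious equation is dropped, the only covering constraint you retain is $2s+p\ge n$, and minimizing $s+p$ subject to that alone gives merely $s+p\ge \lceil n/2\rceil$, off by a factor of roughly $4/3$ from the truth. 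The missing ingredient is exactly the structural fact the local argument exploits: every vertex, including those inside useful classes, must itself be covered, and a useful class is confined to a window of width at most three along the cycle. Encoding that in your $s,p$ framework is possible but essentially reproduces the block analysis; the paper's direct ``four colors on every six consecutive vertices'' is the shorter route to the exact constants, and you should adopt it for the lower bound rather than the LP sketch.
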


\begin{proof}
Let $V(C_n)=\{v_i\mbox{ }|\mbox{ }1\leq i \leq n\}$, and let $v_iv_j\in E(C_n)$ if and only if $|i-j|=1$ (to modulo $n$).
We claim that for every TDC $f$ of $C_n$, we need to at least four colors to color every six consecutive vertices $v_i$, $v_{i+1}$, $v_{i+2}$, $v_{i+3}$, $v_{i+4}$ and $v_{i+5}$. Trivially, we may assume that some color, say $a$, appear at least two times. We assign colors $a$, $b$, $a$ to vertices $v_i$, $v_{i+1}$, $v_{i+2}$, respectively. We can assign color $b$ to vertex $v_{i+3}$ or not. In each case, we need to at least two new colors $c$ and $d$ for coloring the remained vertices. Because, in the first case, we have to assign two new colors $c$ and $d$ to the vertices $v_{i+4}$ and $v_{i+5}$, respectively, and in the second case, we must assign colors $c$, $d$, $c$ to the vertices $v_{i+3}$, $v_{i+4}$, $v_{i+5}$, respectively. Therefore, our claim is proved. We also notice that any six consecutive vertices can be colored by four new colors $a$, $b$, $c$, $d$ in
\[
\emph{way 1: a,b,a,b,c,d},~~~ \mbox{   or   } ~~~\emph{way 2: a,b,a,c,d,c}.
\]
In \emph{way} 1, we have: $v_{i+1}\in pn(V_a;f)$, $v_{i+2}\in pn(V_b;f)$, $v_{i+3}\in pn(V_c;f)$, $v_{i+4}\in pn(V_d;f)$, while in \emph{way} 2 we have: $v_{i+1}\in pn(V_a;f)$, $v_{i+2}\in pn(V_b;f)$, $v_{i+4}\in pn(V_c;f)$, $v_{i+3}\in pn(V_d;f)$. We continue our proof in the following six cases.

\textbf{Case 0: } $n\equiv 0\pmod{6}$. In this case, if $f_0$ is a proper coloring which is obtained by each of ways 1 or 2 or by combining of them, then $f_0$ will be a TDC of $C_n$ with the minimum number $4\lfloor \frac{n}{6}\rfloor$ color classes, as desired.

\textbf{Case 1: } $n\equiv 1\pmod{6}$. In this case, let $f_0$ be the TDC of $C_n-\{v_n\}$ mentioned in Case 0. Since we need to one new color for coloring $v_n$, by assigning a new color $\varepsilon$ to $v_n$ we obtain a TDC of $C_n$ with the minimum number $4\lfloor \frac{n}{6}\rfloor +1$ color classes, as desired.

\textbf{Case 2: } $n\equiv 2\pmod{6}$. In this case, let $f_0$ be the TDC of $C_n-\{v_{n-1},v_n\}$ mentioned in Case 0. Since we need to two new colors for coloring $v_{n-1}$ and $v_n$, by assigning two new colors $\theta$, $\varepsilon$ to $v_{n-1}$, $v_n$, respectively, we obtain a TDC of $C_n$ with the minimum number $4\lfloor \frac{n}{6}\rfloor +2$ color classes, as desired.

\textbf{Case 3: } $n\equiv 3\pmod{6}$. In this case, let $f_0$ be the TDC of $C_n-\{v_{n-2},v_{n-1},v_n\}$ mentioned in Case 0. Since we need to two new colors for coloring $v_{n-2}$, $v_{n-1}$ and $v_n$, by assigning new colors $\varepsilon$, $\theta$, $\varepsilon$ to $v_{n-2}$, $v_{n-1}$, $v_n$, respectively, we obtain a TDC of $C_n$ with the minimum number $4\lfloor \frac{n}{6}\rfloor +2$ color classes, as desired.

\textbf{Case 4: } $n\equiv 4\pmod{6}$. In this case, let $f_0$ be the TDC of $C_n-\{v_{n-3},v_{n-2},v_{n-1},v_n\}$ mentioned in Case 0. Since we need to four new colors for coloring $v_{n-3}$, $v_{n-2}$, $v_{n-1}$ and $v_n$, by assigning new four colors $\pi$, $\varsigma$, $\theta$, $\varepsilon$ to $v_{n-3}$, $v_{n-2}$, $v_{n-1}$, $v_n$, respectively, we obtain a TDC of $C_n$ with the minimum number $4\lfloor \frac{n}{6}\rfloor +4$ color classes, as desired.

\textbf{Case 5: } $n\equiv 5\pmod{6}$. In this case, let $f_0$ be the TDC of $C_n-\{v_{n-4},v_{n-3},v_{n-2},v_{n-1},v_n\}$ mentioned in Case 0. Since we need to four new colors for coloring $v_{n-4}$, $v_{n-3}$, $v_{n-2}$, $v_{n-1}$, $v_n$, by assigning new colors $\pi$, $\varsigma$, $\pi$, $\theta$, $\varepsilon$ to the vertices $v_{n-4}$, $v_{n-3}$, $v_{n-2}$, $v_{n-1}$ and $v_n$, respectively, we obtain a TDC of $C_n$ with the minimum number $4\lfloor \frac{n}{6}\rfloor +4$ color classes, as desired.
\end{proof}

\begin{prop}
\label{chi_dt P_n} Let $P_n$ be a path of order $n\geq 2$. Then
\begin{equation*}
\chi_d^t(P_n)=\left\{
\begin{array}{ll}
2\lceil \frac{n}{3}\rceil -1   & \mbox{if } n\equiv 1\pmod{3},\\
2\lceil \frac{n}{3}\rceil & \mbox{otherwise}.
\end{array}
\right.
\end{equation*}
\end{prop}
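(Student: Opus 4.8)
The plan is to pin down $\chi_d^t(P_n)$ through the single recursion
\[
\chi_d^t(P_n)=\chi_d^t(P_{n-3})+2\qquad(n\geq 5),
\]
together with the base values $\chi_d^t(P_2)=\chi_d^t(P_3)=2$ and $\chi_d^t(P_4)=3$: solving the recursion separately in the three residue classes of $n$ modulo $3$ reproduces exactly the claimed formula, the exceptional $-1$ for $n\equiv 1\pmod 3$ being simply the base value $\chi_d^t(P_4)=3=2\cdot 2-1$ propagating up that class, while $\chi_d^t(P_2)=\chi_d^t(P_3)=2$ govern the other two. Write $V(P_n)=\{v_1,\dots ,v_n\}$ with $v_iv_{i+1}\in E(P_n)$. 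Two facts are used throughout: since $\deg(v_1)=\deg(v_n)=1$, in any TDC the class dominated by $v_1$ (resp.\ $v_n$) must be exactly $\{v_2\}$ (resp.\ $\{v_{n-1}\}$), so $v_2$ and $v_{n-1}$ are singleton colour classes; and since $\Delta(P_n)=2$, every vertex $v_j$ dominates a singleton $\{v_{j\pm1}\}$ or a ``flanking pair'' $\{v_{j-1},v_{j+1}\}$, while by Proposition~\ref{U(CN_(V_i))=V(G)} the common neighbourhoods of the singleton and flanking-pair classes cover $V(P_n)$.

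For the upper bound $\chi_d^t(P_n)\leq\chi_d^t(P_{n-3})+2$: given a $\chi_d^t$-coloring $g$ of the subpath on $v_4,\dots ,v_n$ — in which $v_5$ is necessarily a singleton class, being the class dominated by the endpoint $v_4$ — extend it to $P_n$ with two fresh colours $\alpha,\beta$ by setting $f(v_2)=\beta$, $f(v_1)=f(v_3)=\alpha$, and $f=g$ on $v_4,\dots ,v_n$. Then $f$ is a proper colouring, the $\alpha$-class is exactly $\{v_1,v_3\}$, and one checks directly that $\{v_2\}$ witnesses $v_1$ and $v_3$, the class $\{v_1,v_3\}$ witnesses $v_2$, the (still singleton) class $\{v_5\}$ witnesses $v_4$, and every $v_j$ with $j\geq 5$ keeps its $g$-witness because both its neighbourhood and that class lie inside $\{v_4,\dots ,v_n\}$. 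The base cases are the explicit TDCs $1,2$ on $P_2$; $1,2,1$ on $P_3$; $1,2,3,1$ on $P_4$ (and $\chi_d^t(P_4)\geq 3$ since $v_2,v_3$ are forced distinct singletons, using two colours that neither $v_1$ nor $v_4$ may reuse).

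The lower bound $\chi_d^t(P_n)\geq\chi_d^t(P_{n-3})+2$ is the crux. Take a $\chi_d^t$-coloring $f$ of $P_n$ with $n\geq 7$ (small $n$ by hand) and try to delete $v_1,v_2,v_3$ to obtain a TDC of the subpath on $v_4,\dots ,v_n$ using at most $\chi_d^t(P_n)-2$ colours. First, this deletion always kills at least two colours: $f(v_2)$ disappears, and a short case split on whether $f(v_1)$ and $f(v_3)$ recur among $v_4,\dots ,v_n$ — combined with the fact that the class dominated by $v_2$ is $\{v_1\}$, $\{v_3\}$, or $\{v_1,v_3\}$ — shows one of $f(v_1),f(v_3)$ also disappears, the only alternative forcing $\{v_1,v_3\}$ to be simultaneously a two-element class and to recur, which is impossible. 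Second, the restriction is still a TDC except possibly at the new endpoint $v_4$, whose dominated class must become $\{v_5\}$; this is fine unless in $P_n$ the class dominated by $v_4$ was forced to be the singleton $\{v_3\}$, i.e.\ unless $f(v_3)$ is a singleton and $f(v_5)$ is not. By left--right symmetry one may instead delete $v_{n-2},v_{n-1},v_n$, which works unless $f(v_{n-2})$ is a singleton and $f(v_{n-4})$ is not. The genuinely technical part — the main obstacle — is the residual case in which \emph{both} deletions are blocked; there one must either peel four vertices at one end (using that $\chi_d^t(P_n)=\chi_d^t(P_{n-4})+3$ is also consistent with the formula) after checking that three colours vanish, or argue directly from the covering identity of Proposition~\ref{U(CN_(V_i))=V(G)}: with $s$ the number of singleton classes and $d$ the number of flanking-pair classes one has $n\leq 2s+d$, and the extra requirement that each singleton vertex itself be covered — hence lie next to another singleton vertex or at the centre of a flanking pair — is precisely what rules out the spurious ``$\chi_d^t(P_n)\approx n/2$'' configurations and forces the bound in this situation. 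Combining the two inequalities with the base cases yields the stated value of $\chi_d^t(P_n)$, with the colourings of the upper bound showing tightness.
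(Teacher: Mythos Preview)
Your recursion strategy is a reasonable and genuinely different line of attack from the paper's direct block-partition argument, and your upper bound is clean. The gap is in the lower bound, precisely in the ``both ends blocked'' case that you yourself flag as the obstacle.

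Your proposed escape route~1 --- peel four vertices and claim three colours vanish, invoking a relation $\chi_d^t(P_n)=\chi_d^t(P_{n-4})+3$ --- does not hold. First, the relation itself is false against the formula: for $n=9$ the formula gives $\chi_d^t(P_9)=6$ while $\chi_d^t(P_5)+3=7$. Second, and more to the point, the doubly-blocked configuration can occur in an \emph{optimal} TDC while peeling four vertices kills only two colours. On $P_{10}$, take the singleton classes $\{v_2\},\{v_3\},\{v_8\},\{v_9\}$ together with $\{v_4,v_6\}$, $\{v_5,v_7\}$, $\{v_1,v_{10}\}$: this is a valid $7$-colour TDC (so optimal), both ends are blocked ($f(v_3)$ singleton, $f(v_5)$ not; $f(v_8)$ singleton, $f(v_6)$ not), yet deleting $v_1,\dots,v_4$ removes only the colours of $v_2$ and $v_3$, since $f(v_1)$ survives on $v_{10}$ and $f(v_4)$ survives on $v_6$. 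So the ``three colours vanish'' claim fails exactly where you need it.

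Your escape route~2, the counting inequality $n\le 2s+d$, is correct as stated but does not by itself yield the lower bound; you would still need to convert the adjacency constraint on singleton vertices into an inequality strong enough to force $s+d\ge F(n)$, and you have not done so. What actually works here is closer to the paper's approach: show that any TDC of $P_n$ decomposes the path into consecutive blocks of length $3$, $4$, or $5$ using $2$, $3$, $4$ private colours respectively, and then optimise $2a+3b+4c$ subject to $3a+4b+5c=n$. Alternatively, if you want to rescue the recursion, the correct repair in the blocked case is not to peel more vertices but to peel $v_1,v_2,v_3$ and then \emph{recolour} one vertex (e.g.\ move $v_7$ in the $P_{10}$ example into the class of $v_{10}$) so that $\{v_5\}$ becomes a singleton and $v_4$ acquires a witness without increasing the colour count; but this requires a careful case analysis you have not supplied.
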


\begin{proof}
Let $V(P_n)=\{v_i\mbox{ }|\mbox{ }1\leq i \leq n\}$ and for $1\leq i<j\leq n$, $v_iv_j\in E(C_n)$ if and only if $j=i+1$.
Let $f=(V_1,V_2,...,V_{\ell})$ be an arbitrary TDC of $P_n$. We see that any three, four or five consecutive vertices must be colored by at least two, three or four different colors, respectively. Because any vertex $v_i$ has degree two if $1<i<n$ and has degree one, otherwise. Therefore either $V_j=\{v_{i-1},v_{i+1}\}$ for some $1\leq j \leq \ell$, or $v_{i-1}\in V_j$ and $v_{i+1}\in V_k$ for some $1\leq j<k \leq \ell$ such that $|V_j|=1$ or $|V_k|=1$. This implies that $V(P_n)$ has partitioned to subsets of three consecutive vertices with colors $a,b,a$, or to subsets of four consecutive vertices with colors $a,b,c,a$, or to subsets of five consecutive vertices with colors either $a,b,a,c,d$, or $a,b,c,d,a$ (notice that the colors used in any part are different). By the previous discussion, it can be easily verified that the coloring function $f_0$ with
\begin{equation*}
f_0(v_i)=\left\{
\begin{array}{ll}
1+2k & \mbox{if }i=1+3k \mbox{ or } i=3+3k, \\
2+2k & \mbox{if }i=2+3k, \\
\end{array}
\right.
\end{equation*}
when $0 \leq k \leq \frac{n}{3}-1$, is a TDC of $P_n$ with the minimum number $2\lceil \frac{n}{3} \rceil$ color classes, if $n\equiv 0\pmod{3}$, as desired. Also, the coloring function $f_1$ with
\begin{equation*}
f_1(v_i)=\left\{
\begin{array}{ll}
1+2k & \mbox{if }i=1+3k \mbox{ or } i=3+3k, \\
2+2k & \mbox{if }i=2+3k, \\
\end{array}
\right.
\end{equation*}
when $0 \leq k \leq \lfloor \frac{n}{3} \rfloor -2$, and $f_1(v_{n-3})=f_1(v_n)=2\lfloor \frac{n}{3} \rfloor -1$, $f_1(v_{n-2})=2\lfloor \frac{n}{3} \rfloor$, $f_1(v_{n-1})=2\lfloor \frac{n}{3} \rfloor +1$, is a TDC of $P_n$ with the minimum number $2\lceil \frac{n}{3} \rceil -1$ color classes, if $n\equiv 1\pmod{3}$, as desired. Now let $n\equiv 2\pmod{3}$. If $n=2$, then $P_2=K_2$, and $\chi_d^t(P_2)=2$. Let $n=5$. In this case, $v_1$, $v_2$, $v_3$, $v_4$, $v_5$ can be colored in one of the ways: $a,b,a,c,d$, or $a,b,c,d,a$. Hence $\chi_d^t(P_5)=4$. Now let $n\geq 8$. Then the coloring function $f_2$ with
\begin{equation*}
f_2(v_i)=\left\{
\begin{array}{ll}
1+2k & \mbox{if }i=1+3k \mbox{ or } i=3+3k, \\
2+2k & \mbox{if }i=2+3k, \\
\end{array}
\right.
\end{equation*}
when $0 \leq k \leq \lfloor \frac{n}{3} \rfloor -2$, and $f_2(v_{n-4})=f_2(v_n)=2\lfloor \frac{n}{3} \rfloor -1$, $f_2(v_{n-3})=2\lfloor \frac{n}{3} \rfloor$, $f_2(v_{n-2})=2\lfloor \frac{n}{3} \rfloor +1$, $f_2(v_{n-1})=2\lfloor \frac{n}{3} \rfloor +2$, is a TDC of $P_n$ with the minimum number $2\lceil \frac{n}{3} \rceil$ color classes, as desired.
\end{proof}

\begin{prop}
\label{chi_d^t Cn Complement} Let $\overline{C_n}$ be the complement of the cycle $C_n$ of order $n\geq 4$. Then
\begin{equation*}
\chi_d^t(\overline{C_n})=\left\{
\begin{array}{ll}
4                                & \mbox{if }n=4,5, \\
\lceil \frac{n}{2}\rceil & \mbox{if }n\geq 6.
\end{array}
\right.
\end{equation*}
\end{prop}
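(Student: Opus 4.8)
The plan is to analyze $\overline{C_n}$ directly, using its structure: each vertex $v_i$ is adjacent to all vertices except $v_{i-1}$ and $v_{i+1}$ (indices mod $n$). First I would dispose of the small cases $n=4,5$ by hand. For $n=4$, $\overline{C_4}$ is a perfect matching ($2K_2$), but since we assume connectedness is not required here — actually $\overline{C_4}=2K_2$ is disconnected, and one checks directly from Theorem \ref{chi_d ^t,components} (or by inspection) that $\chi_d^t=4$; for $n=5$, $\overline{C_5}\cong C_5$, and Proposition \ref{chi_dt C_n} with $n\equiv 5\pmod 6$ gives $\chi_d^t(C_5)=4\lfloor 5/6\rfloor+5-1=4$. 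So the substance is the case $n\ge 6$, where I claim $\chi_d^t(\overline{C_n})=\lceil n/2\rceil$.

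For the upper bound when $n\ge 6$, I would exhibit an explicit TDC. The key observation is that $v_i$ and $v_{i+1}$ are the \emph{only} pair of vertices non-adjacent to $v_{i+?}$... more usefully: a color class $C$ is dominated by $v_j$ iff $C\subseteq N(v_j)=V\setminus\{v_{j-1},v_{j+1}\}$, i.e. iff $C$ contains neither $v_{j-1}$ nor $v_{j+1}$. A natural proper coloring of $\overline{C_n}$ pairs up consecutive vertices: the classes $\{v_1,v_2\},\{v_3,v_4\},\dots$ are independent in $\overline{C_n}$ (since $v_{2k-1}v_{2k}$ is an edge of $C_n$), giving $\lceil n/2\rceil$ classes (with $n$ odd the last class a singleton). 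I would check this is a TDC: given $v_j$, I need some pair-class avoiding $v_{j-1},v_{j+1}$; since $v_{j-1},v_{j+1}$ lie in at most two of the pair-classes and there are $\lceil n/2\rceil\ge 3$ classes, a suitable class exists — this needs a small amount of care about when $v_{j-1}$ and $v_{j+1}$ fall in the same versus different classes, and about the singleton class when $n$ is odd, but it goes through. Hence $\chi_d^t(\overline{C_n})\le\lceil n/2\rceil$.

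For the lower bound, suppose $f=(V_1,\dots,V_\ell)$ is a TDC of $\overline{C_n}$. The main point is that each $V_t$ is an independent set of $\overline{C_n}$, hence a \emph{clique} of $C_n$; but the only cliques of $C_n$ (for $n\ge 4$) are single vertices and single edges, so every color class has size at most $2$. Therefore $\ell\ge n/2$, and since $\ell$ is an integer, $\ell\ge\lceil n/2\rceil$. This gives $\chi_d^t(\overline{C_n})\ge\lceil n/2\rceil$, matching the upper bound.

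The only genuinely delicate part is verifying the total-domination condition for the explicit coloring in the upper bound, and in particular checking it does not force an extra color in some residue class of $n$ (e.g. small $n$ near $6$, or parity of $n$); I expect $n=6$ to be the tightest case and would verify it explicitly (there $\lceil n/2\rceil=3$ and one checks the three classes $\{v_1,v_2\},\{v_3,v_4\},\{v_5,v_6\}$ work). The lower bound via the clique-size argument is the robust half and should present no obstacle.
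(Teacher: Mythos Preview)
Your proposal is correct and follows essentially the same approach as the paper: the small cases $n=4,5$ are handled by identifying $\overline{C_4}\cong 2K_2$ and $\overline{C_5}\cong C_5$; for $n\ge 6$ the lower bound comes from $\alpha(\overline{C_n})=2$ (equivalently, cliques of $C_n$ have size at most $2$), and the upper bound from the explicit coloring by consecutive pairs $\{v_{2i-1},v_{2i}\}$ plus a singleton when $n$ is odd. Your verification that this coloring is a TDC (noting $v_{j-1},v_{j+1}$ occupy at most two of the $\ge 3$ classes) is actually more explicit than the paper's, which simply asserts the coloring is a TDC.
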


\begin{proof}
let $V(\overline{C_n})=\{v_i|1\leq i \leq n\}$ and let $v_iv_j$ be an edge if and only if $j\neq i-1,i+1$. If $n=4,5$, then $\overline{C_n}$ is isomorphic to $2K_2$ or $C_5$, respectively, and thus $\chi_d^t(\overline{C_n})=4$. Now let $n\geq 6$. Since $\alpha(\overline{C_n})=2$, for any TDC $f=(V_1,V_2,...,V_{\ell})$ we have $|V_i|\leq 2$ for all $i$. Hence $\chi_d^t(\overline{C_n})\geq \lceil \frac{n}{2}\rceil$. Now for $1\leq i \leq \lfloor\frac{n}{2}\rfloor$ let $V_i=\{v_{2i},v_{2i-1}\}$. Then for even $n$, $f=(V_1,V_2,...,V_{\lfloor\frac{n}{2}\rfloor})$ is a TDC of $\overline{C_n}$ with $\lceil \frac{n}{2}\rceil$ color classes, while for odd $n$, $g=(V_1,V_2,...,V_{\lfloor\frac{n}{2}\rfloor},\{v_n\})$ is a TDC of $\overline{C_n}$ with $\lceil \frac{n}{2}\rceil$ color classes. Thus $\chi_d^t(\overline{C_n})=\lceil \frac{n}{2}\rceil$.
\end{proof}

\begin{prop}
\label{chi_d^t Pn Complement} Let $\overline{P_n}$ be the complement of the path $P_n$ of order $n\geq 4$. Then
\begin{equation*}
\chi_d^t(\overline{P_n})=\left\{
\begin{array}{ll}
3                                & \mbox{if }n=4, \\
\lceil \frac{n}{2}\rceil & \mbox{if }n\geq 5.
\end{array}
\right.
\end{equation*}
\end{prop}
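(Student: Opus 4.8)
The plan is to mirror the structure of Proposition~\ref{chi_d^t Cn Complement} closely, since $\overline{P_n}$ differs from $\overline{C_n}$ only by the absence of one edge of the original cycle, namely the edge $v_1v_n$; equivalently $v_1v_n \in E(\overline{P_n})$ while the pairs $v_iv_{i+1}$ for $1\leq i\leq n-1$ are the non-edges of $\overline{P_n}$. First I would dispose of $n=4$: here $\overline{P_4}$ is isomorphic to $P_4$ itself (a path on four vertices), so by Proposition~\ref{chi_dt P_n} with $n=4\equiv 1\pmod 3$ we get $\chi_d^t(\overline{P_4})=2\lceil 4/3\rceil-1=3$, matching the claim. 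One could also verify directly that two colors are impossible (Theorem~\ref{2=<chi_d ^t=<n} would require $\overline{P_4}$ to be complete bipartite, which it is not) and exhibit an explicit $3$-coloring.

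For $n\geq 5$ the lower bound is the easy half: the only non-adjacencies in $\overline{P_n}$ are the consecutive pairs $\{v_i,v_{i+1}\}$, and no three vertices $v_i,v_j,v_k$ can be pairwise non-adjacent (that would force $j=i+1$ and $k=j+1$ and $k=i+1$ simultaneously), so $\alpha(\overline{P_n})=2$. Hence in any TDC $f=(V_1,\dots,V_\ell)$ every color class has at most two vertices, forcing $\ell\geq\lceil n/2\rceil$. For the upper bound I would take the same pairing used for $\overline{C_n}$: set $V_i=\{v_{2i-1},v_{2i}\}$ for $1\leq i\leq\lfloor n/2\rfloor$, plus the singleton $\{v_n\}$ when $n$ is odd. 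Each $V_i$ is an independent set of $\overline{P_n}$ (it is one of the non-adjacent consecutive pairs), so the coloring is proper, and I must check it is a TDC: every vertex $v$ must be adjacent in $\overline{P_n}$ to all of some class. A vertex $v_j$ is adjacent to both $v_{2i-1}$ and $v_{2i}$ in $\overline{P_n}$ precisely when $j\notin\{2i-2,2i-1,2i,2i+1\}$, so I need, for each $j$, some pair-class $V_i$ avoiding that window of four indices; when $n$ is odd the singleton $\{v_n\}$ dominates any $v_j$ with $j\neq n-1,n$ as well. Since $n\geq 5$ gives at least three classes, such a class always exists — this is the routine but essential verification.

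The main obstacle, as with the cycle complement, is making the domination check airtight at the boundary, i.e. for the vertices $v_1,v_2,v_{n-1},v_n$ near the ends of the indexing and, when $n$ is odd, confirming that the extra singleton class behaves correctly and that no vertex is left with every class failing the domination condition. I expect this to go through uniformly for $n\geq 6$ just as in Proposition~\ref{chi_d^t Cn Complement}; the case $n=5$ should be treated by hand (here $\overline{P_5}$ is the ``house'' graph, $\alpha=2$ gives the lower bound $3=\lceil 5/2\rceil$, and one exhibits the explicit TDC $(\{v_1,v_2\},\{v_3,v_4\},\{v_5\})$ and checks each vertex dominates a class), after which the formula $\chi_d^t(\overline{P_n})=\lceil n/2\rceil$ for $n\geq 5$ is established.
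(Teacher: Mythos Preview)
Your proposal is correct and follows essentially the same route as the paper: handle $n=4$ via $\overline{P_4}\cong P_4$, then for $n\geq 5$ get the lower bound from $\alpha(\overline{P_n})=2$ and the upper bound by reusing the paired coloring $V_i=\{v_{2i-1},v_{2i}\}$ (plus $\{v_n\}$ when $n$ is odd) from Proposition~\ref{chi_d^t Cn Complement}. The paper is even terser---it simply asserts that the colorings from Proposition~\ref{chi_d^t Cn Complement} are also TDCs of $\overline{P_n}$ without separating out $n=5$ or rehearsing the domination check---so your more explicit verification of the boundary vertices and the $n=5$ case is a welcome addition rather than a deviation.
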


\begin{proof}
let $V(\overline{P_n})=\{v_i|1\leq i \leq n\}$ and let $v_iv_j$ be an edge if and only if $\{i,j\}=\{1,n\}$ or $j\neq i-1,i+1$. Since $\overline{P_4}=P_4$, it is clear that $\chi_d^t(\overline{P_n})=3$. Now let $n\geq 5$. $\alpha(\overline{P_n})=2$ implies $\chi_d^t(\overline{P_n})\geq \lceil \frac{n}{2}\rceil$. Since also, the total dominator colorings given in Proposition \ref{chi_d^t Cn Complement} are also total dominator colorings of $\overline{P_n}$ with $\lceil \frac{n}{2}\rceil$ color classes, we obtain $\chi_d^t(\overline{P_n})=\lceil \frac{n}{2}\rceil$.
\end{proof}

\section{\bf A remark}

By comparing the propositions given in Section 4, we will obtain the following results.

\begin{prop}
\label{chi_d^t Pn, Cn} For any $n\geq 3$,
\begin{equation*}
\chi_d^t(P_n)=\left\{
\begin{array}{ll}
\chi_d^t(C_n)+1      & \mbox{if }n=4, \\
\chi_d^t(C_n)-1      & \mbox{if }n\equiv 4\pmod{6}\mbox{ and }n>4, \\
\chi_d^t(C_n)        & \mbox{otherwise.}
\end{array}
\right.
\end{equation*}
\end{prop}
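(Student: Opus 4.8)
The plan is to simply read off the values of $\chi_d^t(P_n)$ and $\chi_d^t(C_n)$ from Propositions \ref{chi_dt C_n} and \ref{chi_dt P_n} and compare them case by case according to the residue of $n$ modulo $6$, using that the reduction for $P_n$ is modulo $3$. First I would record the formula for $\chi_d^t(P_n)$ in a form that depends on $n \bmod 6$ rather than $n \bmod 3$: writing $n = 6q + r$ with $0 \le r \le 5$, one has $\lceil n/3 \rceil = 2q + \lceil r/3 \rceil$, and $n \equiv 1 \pmod 3$ exactly when $r \in \{1,4\}$. So $\chi_d^t(P_n) = 4q + 2\lceil r/3\rceil - [r \in \{1,4\}]$, which gives the six values $4q$, $4q+1$, $4q+2$, $4q+2$, $4q+3$, $4q+4$ for $r = 0,1,2,3,4,5$ respectively (for $n \geq 3$, treating the small cases $n = 3,4,5$ separately since they fall outside the ``$n \geq 8$'' clauses but are covered explicitly in the proof of Proposition \ref{chi_dt P_n}).

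Next I would do the same for $\chi_d^t(C_n)$ from Proposition \ref{chi_dt C_n}: with $n = 6q + r$, the value is $4q + r$ for $r \in \{0,1,2\}$, $4q + r - 1$ for $r \in \{3,5\}$, and $4q + r$ for $r = 4$ — i.e. the six values $4q$, $4q+1$, $4q+2$, $4q+2$, $4q+4$, $4q+4$ for $r = 0,1,2,3,4,5$, with the lone exception $n = 4$ where $\chi_d^t(C_4) = 2$ rather than $4$. Then the proposition follows by subtracting: for $r \in \{0,1,2,3,5\}$ (and $n \neq 4$) both quantities agree term by term, giving $\chi_d^t(P_n) = \chi_d^t(C_n)$; for $r = 4$ we get $4q+4$ versus $4q+4$ on the cycle side but $4q + 3$ on the path side, i.e. $\chi_d^t(P_n) = \chi_d^t(C_n) - 1$; and for $n = 4$ itself, $\chi_d^t(P_4) = 2\lceil 4/3\rceil - 1 = 3$ while $\chi_d^t(C_4) = 2$, giving $\chi_d^t(P_4) = \chi_d^t(C_4) + 1$.

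The only mild subtlety — and the thing I would double-check rather than the ``hard part'' — is the boundary behavior at small $n$ and the alignment of the two different moduli: the path formula is naturally stated mod $3$ and the cycle formula mod $6$, so one must be careful that ``$n \equiv 4 \pmod 6$ and $n > 4$'' is exactly the set where the discrepancy $-1$ occurs, and that no other residue sneaks in a difference. Since $4q + \lceil r/3 \rceil \cdot 2 - [r \in \{1,4\}]$ and the piecewise cycle value coincide for all $r \ne 4$ once $n \ne 4$, this is a finite verification. I would present it as a short table or a line-by-line check over $r \in \{0,1,2,3,4,5\}$ plus the single exceptional value $n = 4$, and that completes the proof; there is no genuine obstacle here since both underlying propositions are already established.
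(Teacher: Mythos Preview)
Your proposal is correct and is exactly the approach the paper takes: the paper does not give a separate proof of this proposition but simply states that it is obtained ``by comparing the propositions given in Section~4,'' i.e., by reading off the explicit values of $\chi_d^t(P_n)$ and $\chi_d^t(C_n)$ from Propositions~\ref{chi_dt C_n} and~\ref{chi_dt P_n} and subtracting. Your residue-by-residue tabulation modulo $6$ is precisely the finite verification that makes this comparison explicit, and your arithmetic checks out in every case including the exceptional value $n=4$.
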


\begin{prop}
\label{chi_d^t Wn, Cn} For any $n\geq 3$,
\begin{equation*}
\begin{array}{ll}
\chi_d^t(C_n)<\chi_d^t(W_n)      & \mbox{if }n=3,4, \\
\chi_d^t(C_n)=\chi_d^t(W_n)      & \mbox{if }n=5, \\
\chi_d^t(C_n)>\chi_d^t(W_n)      & \mbox{otherwise.}
\end{array}
\end{equation*}
\end{prop}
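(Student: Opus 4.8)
The plan is to prove Proposition \ref{chi_d^t Wn, Cn} by a straightforward term-by-term comparison of the two explicit formulas already established in Section 4, namely $\chi_d^t(W_n)$ from Proposition \ref{chi_dt W_n} and $\chi_d^t(C_n)$ from Proposition \ref{chi_dt C_n}. First I would record the two values side by side: $\chi_d^t(W_n)$ is $3$ for even $n$ and $4$ for odd $n$, while $\chi_d^t(C_n)$ is $2$ when $n=4$, is $4\lfloor n/6\rfloor+r$ for $n\equiv r\pmod 6$ with $r\in\{0,1,2,4\}$, and is $4\lfloor n/6\rfloor+r-1$ for $r\in\{3,5\}$. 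Since the wheel value is bounded (it is always $3$ or $4$), the inequality is governed entirely by the size of $\chi_d^t(C_n)$.

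The proof splits naturally into the small cases $n\in\{3,4,5\}$ handled by direct substitution, and the bulk case $n\ge 6$. For $n=3$: $C_3=K_3$ has $\chi_d^t(C_3)=3$ by Theorem \ref{2=<chi_d ^t=<n}, and $W_3$ has $n=3$ odd so $\chi_d^t(W_3)=4$, giving strict inequality $3<4$. For $n=4$: $\chi_d^t(C_4)=2<3=\chi_d^t(W_4)$. For $n=5$: $5\equiv 5\pmod 6$, so $\chi_d^t(C_5)=4\cdot 0+5-1=4$, and $W_5$ with $n=5$ odd gives $\chi_d^t(W_5)=4$, hence equality. This disposes of the first two lines of the statement.

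For the "otherwise" case $n\ge 6$ I would argue that $\chi_d^t(C_n)\ge 5>4\ge\chi_d^t(W_n)$. The right inequality is immediate from Proposition \ref{chi_dt W_n}. For the left one, note $\lfloor n/6\rfloor\ge 1$ when $n\ge 6$; writing $n\equiv r\pmod 6$, the value $\chi_d^t(C_n)$ equals $4\lfloor n/6\rfloor+r$ or $4\lfloor n/6\rfloor+r-1$, and the minimum over all residues and all $n\ge 6$ is achieved when $\lfloor n/6\rfloor=1$ and the additive term $r$ (or $r-1$) is smallest. The candidate small values are: $r=0$ gives $4$ but that corresponds to $n=6$ — wait, $n=6$ gives $4\lfloor 6/6\rfloor + 0 = 4$, so I must check this boundary carefully. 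Actually for $n=6$, $\chi_d^t(C_6)=4$, and $W_6$ has $n=6$ even so $\chi_d^t(W_6)=3$, giving $4>3$, still strict. So the correct claim for $n\ge 6$ is simply $\chi_d^t(C_n)>\chi_d^t(W_n)$, and one checks it holds at $n=6,7,8,9,10,11$ by hand (values $4,4+1=5,4+2=6,4+2=6,4+4=8,4+4=8$ for the cycle against $3$ or $4$ for the wheel), and for $n\ge 12$ one has $\lfloor n/6\rfloor\ge 2$ so $\chi_d^t(C_n)\ge 4\cdot 2+0-1=7>4\ge\chi_d^t(W_n)$.

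The main obstacle — really the only delicate point — is the low boundary at $n=6$, where $\chi_d^t(C_6)=4$ must still strictly exceed $\chi_d^t(W_6)=3$; one must be careful not to lump $n=6$ into a generic estimate that only yields $\chi_d^t(C_n)\ge 5$. Once the residues $n\equiv 0,1,\dots,5\pmod 6$ with $\lfloor n/6\rfloor=1$ are each checked explicitly, together with the parity of $n$ determining $\chi_d^t(W_n)\in\{3,4\}$, the rest is a monotone bound. I would present it as: verify $n=3,4,5$ by substitution, verify $n=6,\dots,11$ by substitution, and for $n\ge 12$ use $\lfloor n/6\rfloor\ge 2 \Rightarrow \chi_d^t(C_n)\ge 7 > 4 \ge \chi_d^t(W_n)$.
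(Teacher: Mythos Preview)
Your proposal is correct and follows exactly the approach the paper intends: the paper offers no explicit proof for this proposition, merely stating that it is obtained ``by comparing the propositions given in Section 4,'' which is precisely the term-by-term comparison of the formulas from Propositions \ref{chi_dt W_n} and \ref{chi_dt C_n} that you carry out. Your explicit verification for $3\le n\le 11$ and the crude bound for $n\ge 12$ are more detail than the paper itself provides; the minor arithmetical slip in writing $4\cdot 2+0-1=7$ (the actual minimum for $\lfloor n/6\rfloor=2$ is $8$, since the $-1$ correction applies only when $r\in\{3,5\}$) is harmless because either bound exceeds $4$.
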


Propositions \ref{chi_d^t Pn, Cn} and \ref
{chi_d^t Wn, Cn} confirm the truth of the next remark.

\begin{rem}
\label{obs2} \emph{If $H$ is a subgraph of a graph $G$, we can not conclude that always $\chi_d^t(H)\leq \chi_d^t(G)$ holds or $\chi_d^t(H)\geq \chi_d^t(G)$}.
\end{rem}

\section{\bf Trees}
\vskip 0.4 true cm

In this section, we discuss on the total dominator chromatic number
of a \emph{tree}, which is a connected simple graph which has no
cycle. First we present some needed definitions. In a connected
graph $G$ the \emph{distance} between two vertices $u$ and $v$,
written $d_G(u,v)$ or simply $d(u,v)$, is the least length of a
$u$,$v$-path, and the \emph{diameter} of $G$, written
\emph{diam(G)}, is $\max_{u,v\in V(G)}d(u,v)$.

The \emph{eccentricity} of a vertex $u$, written $\epsilon(u)$, is
$\max_{v\in V(G)}d(u,v)$, while the \emph{radius} of $G$, written
\emph{rad(G)}, is $\min_{v\in V(G)}\epsilon(u)$. The \emph{center}
of $G$ is the subgraph induced by the vertices of minimum
eccentricity.

The following theorem describes the center of trees.

\begin{thm}
\label{Center of a tree} \emph{(\textbf{Jordan \cite{West}})}
 The
center of a tree is a vertex or an edge.
\end{thm}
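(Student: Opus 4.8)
The plan is to prove Jordan's theorem by induction on the order $n$ of the tree $T$, at each stage stripping off all of its leaves. The base cases are $n=1$, where the center is the unique vertex, and $n=2$, where $T=K_2$, both vertices have eccentricity $1$, and the center is the whole edge. For the inductive step, assume $n\ge 3$ and let $T'$ be the graph obtained from $T$ by deleting every vertex of degree $1$. First I would check that $T'$ is again a tree with $1\le |V(T')|<n$: it is acyclic as a subgraph of $T$; it has at least one vertex because the interior vertices of a longest path of $T$ (which has length at least $2$ since $n\ge 3$) are not leaves; it is connected because for any two non-leaf vertices $u,x$ every interior vertex of the unique $u$--$x$ path in $T$ has degree at least $2$ and hence survives, so that path lies in $T'$; and $|V(T')|<n$ since a tree on at least two vertices has a leaf. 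This also shows $d_{T'}(u,x)=d_T(u,x)$ for all $u,x\in V(T')$, so $T'$ is an induced subgraph of $T$.

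The heart of the argument is the lemma that $\epsilon_{T'}(u)=\epsilon_T(u)-1$ for every $u\in V(T')$. For the inequality ``$\le$'', I would note that any vertex $z$ with $d_T(u,z)=\epsilon_T(u)$ must be a leaf: a non-leaf $z$ would have a neighbor off the $u$--$z$ path, lying at distance $\epsilon_T(u)+1$ from $u$, a contradiction. Hence no such $z$ belongs to $V(T')$, and since distances agree on $V(T')$ we get $\epsilon_{T'}(u)\le\epsilon_T(u)-1$. For ``$\ge$'', take such a leaf $z$ with its unique neighbor $y$; since $n\ge 3$, $y$ is not a leaf, so $y\in V(T')$, and $y$ lies on the $u$--$z$ path, whence $d_{T'}(u,y)=d_T(u,y)=\epsilon_T(u)-1$.

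Next I would show that no leaf of $T$ lies in the center, so $C(T)\subseteq V(T')$. If $v$ is a leaf with neighbor $w$, then since $n\ge 3$ there is a vertex other than $v$ and $w$, forcing $\epsilon_T(v)\ge 2$; for every $x\ne v$ the $v$--$x$ path passes through $w$, so $d_T(w,x)=d_T(v,x)-1\le\epsilon_T(v)-1$, and also $d_T(w,v)=1\le\epsilon_T(v)-1$, giving $\epsilon_T(w)\le\epsilon_T(v)-1<\epsilon_T(v)$. Thus the minimum of $\epsilon_T$ over $V(T)$ is attained only inside $V(T')$, and by the lemma the minimizers of $\epsilon_T$ on $V(T)$ coincide with the minimizers of $\epsilon_{T'}$ on $V(T')$; since $T'$ is an induced subgraph of $T$ this yields $C(T)=C(T')$. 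The induction hypothesis then says $C(T')$ is a vertex or an edge, completing the proof.

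I expect the main obstacle to be keeping the small and degenerate cases straight while making the ``delete the leaves, every eccentricity drops by exactly one'' lemma fully rigorous — in particular verifying that $T'$ is nonempty and connected, and that the neighbor of a leaf is itself a non-leaf once $n\ge 3$, so that it is retained in $T'$ and can witness the ``$\ge$'' half of the lemma.
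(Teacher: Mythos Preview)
Your proof is correct and is in fact the standard leaf-stripping induction due to Jordan. However, the paper does not give a proof of this theorem at all: it is quoted as a classical result and attributed to Jordan with a reference to West's textbook, so there is no in-paper proof to compare against. Your argument is essentially the one found in \cite{West}, so nothing further is needed here.
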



In a tree, a \emph{leaf} is a vertex of degree one, while a
\emph{support vertex} is the neighbor of a leaf with degree more
than one. In this section, the set of leaves is denoted by $L$ and
$\ell=|L|$ , while the set of support vertices is denoted by $S$ and
$s=|S|$. I this section, we agree the following notations. Let
$S=\{v_i| 1 \leq i\leq s\}$, and $L=\{u_i | 1 \leq i\leq \ell\}$.
Also $\sigma$ denotes a function on $\{1,2,...,s\}$, the set of
indices of the elements of $S$, such that $\sigma(i)=j$ if $u_i$ is
adjacent to $v_j$. Hence $v_{\sigma(i)}$ denotes the support vertex
of $u_i$.

We start our discussion with the following lemma.

\begin{lem}
\label{chi_d^t(T)>=s+1} For any tree $T$ of order $n\geq 3$,
$\chi_d^t(T)\geq s+1$.
\end{lem}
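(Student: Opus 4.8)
The plan is to show that in any total dominator coloring $f$ of $T$, each support vertex forces a distinct color on its leaves, and then account for at least one further color needed by the support vertices themselves. First I would observe the basic structural fact: if $u$ is a leaf with support vertex $v$, then the only neighbor of $u$ is $v$, so the unique color class that $u$ is adjacent to (in its entirety) must be $\{v\}$ — equivalently, the color class of $v$ is a singleton and $u$ "uses" that singleton class $\{v\}$ as its total-dominating class. In particular, for every support vertex $v_j$ that actually has a leaf, the class containing $v_j$ must equal $\{v_j\}$.

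Next I would use this to produce $s$ pairwise distinct colors. Since distinct support vertices $v_i\neq v_j$ are distinct vertices whose color classes are both singletons $\{v_i\}$ and $\{v_j\}$, these are $s$ different colors, one per support vertex. So $f$ already uses at least $s$ colors just on $S$. The remaining task is to exhibit one more color not among these $s$. For that I would pick a leaf $u$ with support vertex $v_{\sigma}$; its color $f(u)$ must differ from $f(v_\sigma)$ by properness, and I need to argue $f(u)$ is not the color of any other support vertex $v_j$. This is where a small case analysis is needed: if $f(u)=f(v_j)$ for some support vertex $v_j\neq v_{\sigma}$, then since $\{v_j\}$ is a singleton color class, we would need $u=v_j$, contradicting that $u$ is a leaf while $v_j$ is a support vertex (these are disjoint since $n\ge 3$ guarantees a support vertex has degree $\geq 2$). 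Hence $f(u)$ is a new color, giving $\chi_d^t(T)\ge s+1$.

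The main obstacle — really the only delicate point — is the borderline case where a single vertex could conceivably be forced to play multiple roles, e.g. a support vertex that is itself a leaf's only neighbor and also adjacent to another configuration; the hypothesis $n\ge 3$ is exactly what rules out the degenerate $P_2$ situation and ensures $L\cap S=\emptyset$, so I would state that disjointness explicitly and lean on it. I should also double-check the edge case $s$ could be small (even $s=1$): then the claim is just $\chi_d^t(T)\ge 2$, which already follows from Theorem \ref{2=<chi_d ^t=<n}, so nothing is lost. Assembling these observations — singleton classes at support vertices, $s$ distinct such colors, plus one extra color on a leaf — yields the bound $\chi_d^t(T)\ge s+1$.
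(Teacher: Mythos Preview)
Your proposal is correct and follows essentially the same route as the paper's proof: both argue that each leaf forces its support vertex into a singleton color class, yielding $s$ distinct colors, and then observe that a leaf must receive a further color. Your version is simply more explicit in justifying why a leaf's color cannot coincide with that of any support vertex (via the singleton property), a step the paper states in one line without elaboration.
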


\begin{proof}
$N({u_{i})=\{v_{\sigma(i)}}\}$ implies that in every TDC of $T$,
every vertex $v_i$ must be contained in a color class with
cardinality one. Since we must assign at least a new color to the
vertices in $L$, we obtain $\chi_d^t(T)\geq s+1$.
\end{proof}
Next proposition can be obtained easily and we have omitted its
proof.
\begin{prop}
\label{chi_d^t(T)=s+1, if V-L=S} Let $T$ be a tree of order $n\geq
3$. If every vertex in $T$ is a leaf or support vertex, then
$\chi_d^t(T)=s+1$.
\end{prop}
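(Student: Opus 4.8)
The plan is to combine the lower bound already available from Lemma~\ref{chi_d^t(T)>=s+1} with an explicit construction achieving $s+1$ colors. Since the hypothesis says every vertex of $T$ is a leaf or a support vertex, we have $V(T)=L\cup S$ with $L\cap S=\emptyset$, so $n=\ell+s$. I would define the coloring $f$ on $V(T)$ by $f(v_i)=i$ for each support vertex $v_i$ ($1\le i\le s$) and $f(u)=s+1$ for every leaf $u\in L$. It remains to verify that $f$ is a proper coloring and a total dominator coloring using exactly these $s+1$ color classes; together with Lemma~\ref{chi_d^t(T)>=s+1} this gives $\chi_d^t(T)=s+1$.

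First I would check properness. The support vertices receive pairwise distinct colors, so no monochromatic edge lies inside $S$; an edge joining a leaf to a support vertex is bichromatic because the leaf has color $s+1>s\ge f(v_i)$; finally $L$ is an independent set, since two adjacent leaves would form a $K_2$ connected component, contradicting that $T$ is connected of order $n\ge 3$. Hence $f$ is proper with color classes $\{v_1\},\dots,\{v_s\}$ and $L$.

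Next I would verify the total dominator property, treating each vertex. A leaf $u_i$ is adjacent to its support vertex $v_{\sigma(i)}$, and $\{v_{\sigma(i)}\}$ is a singleton color class, so $u_i$ is adjacent to every vertex of that class. For a support vertex $v_j$ I would split into cases. If $s=1$, then $T$ has a unique non-leaf vertex, every leaf is adjacent to it, so $T=K_{1,n-1}$ and $v_1$ is adjacent to all vertices of the leaf class $L$. If $s\ge 2$, I claim $v_j$ has a neighbor in $S$: the induced subgraph $T[S]$ is connected, because any vertex lying on a path in $T$ between two support vertices has degree at least $2$ and hence is itself a support vertex; thus $T[S]$ is a subtree with at least two vertices, so $v_j$ has degree at least $1$ in $T[S]$, i.e. $v_j$ is adjacent to some $v_i$ with $i\ne j$, and $\{v_i\}$ is a singleton color class dominated by $v_j$. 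Therefore $f$ is a TDC with $s+1$ colors, giving $\chi_d^t(T)\le s+1$.

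The only genuinely nontrivial point—and the step I expect to require the most care—is the domination requirement for support vertices, which is why the argument needs the case $s=1$ (handled via the star structure) separated from $s\ge 2$ (handled via the connectedness of $T[S]$). Everything else is a routine verification, which is presumably why the original statement says the proof can be omitted.
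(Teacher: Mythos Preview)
Your argument is correct. The paper itself omits the proof of this proposition as easy, so there is no approach to compare against; the construction you give (singleton color classes for the support vertices and one extra class for all leaves), combined with the lower bound from Lemma~\ref{chi_d^t(T)>=s+1}, is precisely the natural argument one would expect, and your case split on $s=1$ versus $s\ge 2$ via the connectedness of $T[S]$ cleanly handles the only point requiring any care.
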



\begin{prop}
\label{chi_d^t(T)=s+1, if diam<=3} Let $T$ be a tree of order $n\geq
3$. If $diam(T)\leq 3$, then $\chi_d^t(T)=s+1$.
\end{prop}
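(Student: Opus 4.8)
The plan is to prove $\chi_d^t(T) = s+1$ by establishing the matching upper bound, since Lemma~\ref{chi_d^t(T)>=s+1} already gives $\chi_d^t(T) \geq s+1$. So it suffices to exhibit a total dominator coloring of $T$ using exactly $s+1$ colors. The key structural fact I would use is that $\mathrm{diam}(T) \leq 3$ forces $T$ to have a very restricted shape: by Theorem~\ref{Center of a tree} the center is a vertex or an edge, and a diameter-$3$ tree is a ``double star'' (two adjacent support vertices, each carrying some leaves, the center being the edge joining them), while a diameter-$\leq 2$ tree is a star $K_{1,n-1}$ (center a single vertex). In the diameter-$\leq 2$ case, $T$ is a star, every non-leaf vertex is a support vertex, so $s=1$ and the result is just $\chi_d^t(K_{1,n-1})=2$, which also follows from Proposition~\ref{chi_d^t(T)=s+1, if V-L=S} since every vertex is a leaf or support vertex.

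For the main case $\mathrm{diam}(T)=3$, write $T$ as a double star with the two adjacent support vertices $v_1, v_2$ (so $s=2$), where $v_1$ has leaf-neighbors forming a set $A$ and $v_2$ has leaf-neighbors forming a set $B$, and $V(T) = \{v_1,v_2\}\cup A \cup B$. I would define a coloring $f$ as follows: assign color $1$ to $v_1$, color $2$ to $v_2$, and color $3$ to all leaves in $A \cup B$. This uses $3 = s+1$ colors. It is a proper coloring: $v_1v_2$ is the only edge among $\{v_1,v_2\}$ and they get distinct colors; every leaf is adjacent only to its support vertex, and leaves get color $3$ while support vertices get colors $1,2$. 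It is a total dominator coloring: $v_1$ is adjacent to every vertex of the color class $V_2 = \{v_2\}$; $v_2$ is adjacent to every vertex of $V_1 = \{v_1\}$; each leaf in $A$ is adjacent to every vertex of $V_1 = \{v_1\}$ (its color class is a singleton); and each leaf in $B$ is adjacent to every vertex of $V_2 = \{v_2\}$. Hence $\chi_d^t(T) \leq 3 = s+1$, completing the argument. For general $s$ (which here happens only when $s\leq 2$ as just noted, but to be safe) one assigns a distinct private color to each support vertex and one shared color to all leaves, and the same verification goes through using that each leaf's unique neighbor is its support vertex whose color class is the corresponding singleton.

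The only real obstacle is making the case analysis on the shape of $T$ airtight — namely arguing cleanly that $\mathrm{diam}(T)\leq 3$ implies $T$ is a star or a double star, and in particular that in the double-star case every vertex other than $v_1,v_2$ is a leaf so that $V(T)$ is exactly $\{v_1,v_2\}\cup L$. This is a standard but necessary observation: if some vertex were at distance $2$ from the center edge it would create a path of length $\geq 4$. Once that is in place, the coloring and its verification are routine. Indeed, the cleanest write-up would observe that $\mathrm{diam}(T)\leq 3$ implies every vertex of $T$ is a leaf or a support vertex, and then simply invoke Proposition~\ref{chi_d^t(T)=s+1, if V-L=S} directly.
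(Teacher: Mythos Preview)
Your proposal is correct and follows essentially the same approach as the paper: both invoke Lemma~\ref{chi_d^t(T)>=s+1} for the lower bound and exhibit the coloring $(\{v_1\},\{v_2\},\ldots,\{v_s\},V(T)-S)$, giving each support vertex its own color and all leaves one shared color. The only cosmetic difference is that the paper encapsulates the structural observation via the leaf-to-leaf path description (every $u_i,u_j$-path has the form $u_iv_{\sigma(i)}v_{\sigma(j)}u_j$ or $u_iv_{\sigma(i)}u_j$), whereas you spell out the star/double-star classification explicitly; your concluding remark that one could simply invoke Proposition~\ref{chi_d^t(T)=s+1, if V-L=S} is also apt.
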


\begin{proof}
$diam(T)\leq 3$ implies that for every two leaves $u_i$ and $u_j$,
there exist one of the $u_i$,$u_j$-paths:
$u_iv_{\sigma(i)}v_{\sigma(j)}u_j$ or $u_iv_{\sigma(i)}u_j$. Now
this fact that $(\{v_1\},\{v_2\},...,\{v_s\},V(T)-S)$ is a TDC of
$T$ and Lemma \ref{chi_d^t(T)>=s+1} imply $\chi_d^t(T)=s+1$.
\end{proof}

If we look carefully at the proof of Proposition
\ref{chi_d^t(T)=s+1, if diam<=3}, we may obtain next corollary.

\begin{cor}
\label{chi_d^t(T)>=s+2, if diam>=5} Let $T$ be a tree of order
$n\geq 3$ and $L\cup S\neq V(T)$. If $diam(T)\geq 5$, then
$\chi_d^t(T)\geq s+2$.
\end{cor}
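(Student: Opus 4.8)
The plan is to take the lower bound $\chi_d^t(T)\ge s+1$ furnished by Lemma \ref{chi_d^t(T)>=s+1} and rule out equality. Suppose $\chi_d^t(T)=s+1$ and fix a $\chi_d^t(T)$-colouring $f=(V_1,\dots,V_{s+1})$. Exactly as in the proof of Lemma \ref{chi_d^t(T)>=s+1}, each leaf $u_i$ satisfies $N(u_i)=\{v_{\sigma(i)}\}$, so the only colour class it can be adjacent to in its entirety is $\{v_{\sigma(i)}\}$; hence $\{v_{\sigma(i)}\}$ is a colour class of $f$. Letting $u_i$ range over all leaves, the $s$ support vertices account for $s$ distinct singleton colour classes, and since $f$ has exactly $s+1$ classes the remaining class must be $V(T)-S$. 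Thus $f=(\{v_1\},\dots,\{v_s\},\,V(T)-S)$ is completely forced; this is precisely the colouring used in Proposition \ref{chi_d^t(T)=s+1, if diam<=3}, so it suffices to show that, when $\mathrm{diam}(T)\ge 5$ and $L\cup S\ne V(T)$, this colouring fails to be a total dominator colouring.

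To do this I would use two features of $f$. First, $f$ is proper, so $V(T)-S$ is independent; equivalently, every vertex outside $S$ has all its neighbours in $S$. Second, the only colour classes available to any vertex are the singletons $\{v_j\}$ and the large class $V(T)-S$, and a vertex dominates the latter only if it is adjacent to every vertex of $V(T)-S$. Now take a diametral path $x_0x_1\cdots x_d$ with $d\ge 5$: the ends $x_0,x_d$ are leaves, $x_1,x_{d-1}$ are distinct support vertices (so $s\ge 2$), and the interior vertices are non-leaves. Using the independence of $V(T)-S$ along this path together with the hypothesis $L\cup S\ne V(T)$, I would locate an interior vertex $z$ that is either (a) a support vertex none of whose neighbours is a support vertex — whence $z$ dominates no singleton class, and, being at distance $\ge 2$ from the leaf at the far end of the path, does not dominate $V(T)-S$ either — or (b) a non-leaf, non-support vertex forced into the same deadlock. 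In either case $z$ violates the total dominator condition for $f$, contradicting the choice of $f$; hence $\chi_d^t(T)\ge s+2$.

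The main obstacle is exactly the location of $z$: turning the two hypotheses ``$\mathrm{diam}(T)\ge 5$'' and ``$L\cup S\ne V(T)$'' into an explicit offending vertex. The local situation is tightly controlled — leaves are automatically fine, a support vertex with a support-vertex neighbour is fine, and a non-leaf non-support vertex is fine as soon as one of its (necessarily support-vertex) neighbours itself dominates a class — so the delicate point is to propagate these constraints along a diametral path of length $\ge 5$ and show they cannot all hold simultaneously. I would attempt this by a case analysis on which of $x_2,x_3,x_4$ lie in $S$, mirroring and extending by two ``levels'' the path-tracing in the proof of Proposition \ref{chi_d^t(T)=s+1, if diam<=3}, as the remark preceding the corollary suggests; this local-to-global step is where the real work lies and must be carried out with care.
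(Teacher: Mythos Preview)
Your plan mirrors exactly what the paper intends: the remark ``look carefully at the proof of Proposition \ref{chi_d^t(T)=s+1, if diam<=3}'' is precisely the observation that any $(s+1)$-class TDC is forced to be $(\{v_1\},\dots,\{v_s\},V(T)\setminus S)$, and one then tries to show this colouring fails. So the approach is the paper's own.

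However, the step you correctly flag as ``the main obstacle'' --- producing a vertex $z$ that dominates no colour class --- cannot be carried out in general, because the corollary as stated is \emph{false}. Consider the tree $T$ on the path $x_0x_1x_2x_3x_4x_5x_6$ together with pendant leaves $y$ at $x_2$ and $z$ at $x_4$. Here $L=\{x_0,x_6,y,z\}$, $S=\{x_1,x_2,x_4,x_5\}$ (so $s=4$), the vertex $x_3$ lies in neither $L$ nor $S$, and $\mathrm{diam}(T)=6\ge 5$. The forced colouring
\[
(\{x_1\},\{x_2\},\{x_4\},\{x_5\},\;\{x_0,x_3,x_6,y,z\})
\]
is proper (the last class is independent) and is a TDC: every leaf dominates its support singleton; $x_3$ dominates $\{x_2\}$; $x_1$ and $x_2$ dominate one another's singletons, as do $x_4$ and $x_5$. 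Hence $\chi_d^t(T)=s+1=5$, contradicting the claimed bound $s+2$.

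Your case analysis would have revealed this: when the interior vertices $x_2,x_4$ on the diametral path are themselves support vertices adjacent to the support vertices $x_1,x_5$, every vertex of $S$ has a neighbour in $S$, and the non-support vertex $x_3$ is flanked by support vertices, so no offending $z$ exists. The hypothesis $L\cup S\neq V(T)$ together with $\mathrm{diam}(T)\ge 5$ is simply not strong enough to force $\chi_d^t(T)\ge s+2$; the paper's corollary (and its non-proof) is in error.
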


\begin{prop}
\label{chi_d^t(T)=s+ 1 or 2, if diam=4} Let $T$ be a tree with
$diam(T)=4$. Then
\begin{equation*}
\chi_d^t(T)=\left\{
\begin{array}{ll}
s+1  & \mbox{if }d(u_i,u_j)=3, \mbox{ for some } u_i,u_j\in L, \\
s+2  & \mbox{otherwise.}
\end{array}
\right.
\end{equation*}
\end{prop}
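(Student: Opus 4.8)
The plan is to first determine the structure of a tree $T$ with $\mathrm{diam}(T)=4$ and then read both bounds off it. Since the diameter is even, Theorem~\ref{Center of a tree} gives a single central vertex $c$, and $\mathrm{rad}(T)=2$, so every vertex lies within distance $2$ of $c$; hence $V(T)$ is the disjoint union of $\{c\}$, the set $A=N(c)$, and the set $B$ of vertices at distance $2$ from $c$. Every vertex of $B$ is a leaf (a vertex at distance $2$ from $c$ can have no neighbour other than its parent), and every vertex of $A$ is either a leaf whose only neighbour is $c$ or a support vertex whose neighbours other than $c$ all lie in $B$. Let $A_s\subseteq A$ be the support vertices in $A$ and $m=|A_s|$; the two neighbours of $c$ on a diametral path are distinct members of $A_s$, so $m\geq 2$. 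The key dictionary is then: $c$ is a support vertex $\iff A\setminus A_s\neq\varnothing$ $\iff$ some two leaves are at distance $3$ (two leaves of $B$ are at distance $2$ or $4$, two leaves of $A\setminus A_s$ at distance $2$, a leaf of $A\setminus A_s$ together with a leaf of $B$ at distance $3$, and $B\neq\varnothing$). Consequently $s=m$ when no two leaves are at distance $3$ and $s=m+1$ otherwise; in either case the value claimed by the proposition equals $m+2$, so it suffices to prove $\chi_d^t(T)=m+2$.

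For the upper bound I would exhibit a TDC with $m+2$ colours: give the $m$ vertices of $A_s$ distinct colours $1,\dots,m$, give $c$ colour $m+1$, and give every remaining vertex (precisely the leaves, i.e.\ $B\cup(A\setminus A_s)$) colour $m+2$. Since every edge of $T$ joins $c$ to $A$ or joins a vertex of $A_s$ to one of its leaf children in $B$, the colouring is proper; it is a total dominator colouring because each leaf of $B$ is adjacent to the singleton class formed by its parent in $A_s$, each leaf of $A\setminus A_s$ and each vertex of $A_s$ is adjacent to the singleton class $\{c\}$, and $c$ is adjacent to the singleton class of colour $1$. Hence $\chi_d^t(T)\leq m+2$.

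For the lower bound, recall from the proof of Lemma~\ref{chi_d^t(T)>=s+1} that in every TDC each support vertex must constitute a singleton colour class, so the $s$ support vertices occupy $s$ distinct colours used on no other vertex. If some two leaves are at distance $3$, then $s=m+1$ and Lemma~\ref{chi_d^t(T)>=s+1} already yields $\chi_d^t(T)\geq s+1=m+2$, matching the upper bound, so $\chi_d^t(T)=s+1$. If no two leaves are at distance $3$, then $A=A_s$, $c$ is not a support vertex, and $s=m$; here I would rule out a TDC with only $m+1$ colours. In such a colouring the $m$ vertices of $A$ would use up $m$ singleton classes, forcing the one remaining colour onto all of $\{c\}\cup B$; but then any $a\in A$ dominates no colour class, since it is adjacent to no other vertex of $A$ and, because some $a'\in A_s\setminus\{a\}$ (which exists as $m\geq2$) has a leaf child in $B$ not adjacent to $a$, it does not dominate $\{c\}\cup B$ either. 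This contradiction gives $\chi_d^t(T)\geq m+2=s+2$.

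The main obstacle is this last contradiction — showing $s+1$ colours do not suffice when no two leaves lie at distance $3$; the rest is structural bookkeeping about diameter-$4$ trees plus verification of one explicit colouring. The real point is that once the support vertices are forced into singleton classes, the only conceivable ``extra'' class $\{c\}\cup B$ is never fully dominated by a vertex of $A_s$, because $m\geq2$ splits $B$ into at least two bunches of leaves lying under different vertices of $A_s$.
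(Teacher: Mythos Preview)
Your proof is correct and follows the same overall strategy as the paper's: identify the unique centre $c$ of the diameter-$4$ tree, exhibit an explicit TDC for the upper bound, and for the lower bound in the second case argue that with only $s+1$ colours the support vertices are forced into singletons and the residual class $\{c\}\cup B$ cannot be dominated by any vertex of $A_s$. The one organisational difference is that you unify both cases by observing that the target value equals $m+2$ (with $m=|A_s|$) and give a single colouring that works throughout, whereas the paper treats the first case separately by noting that $d(u_i,u_j)=3$ for some leaves forces $c\in S$, hence $V(T)=L\cup S$, and then invokes Proposition~\ref{chi_d^t(T)=s+1, if V-L=S}. Your lower-bound argument is also more carefully justified than the paper's, which asserts without detail that $(\{v_1\},\dots,\{v_s\},V(T)\setminus S)$ is ``the only'' TDC on $s+1$ colours; you spell out why every non-support vertex is forced into the single remaining colour.
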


\begin{proof}
$diam(G)=4$ implies the center of $T$ is a vertex, say $w$. If
$d(u_i,u_j)=3$, for some $u_i,u_j\in L$, then $\chi_d^t(T)=s+1$, by
Proposition \ref{chi_d^t(T)=s+1, if V-L=S}.

Now, assume $d(u_i,u_j)\neq 3$, for every $u_i,u_j\in L$. Then
$d(u_i,w)=2$ for any $u_i\in L$. Also for every two leaves $u_i$ and
$u_j$, there exist one of the $u_i$,$u_j$-paths:
$u_iv_{\sigma(i)}wv_{\sigma(j)}u_j$ or $u_iv_{\sigma(i)}u_j$. By the
contrary, let $\chi_d^t(T)=s+1$. Thus
$(\{v_1\},\{v_2\},...,\{v_s\},V(T)-S)$ is the only TDC of $T$. But
this is not possible, since for any $1\leq i\leq s$ vertex $v_i$ is
not adjacent to all vertices of a color class. Therefore,
$\chi_d^t(T)\geq s+2$. Now since
$(\{v_1\},\{v_2\},...,\{v_s\},\{w\},V(T)-(S\cup\{w\}))$ is a TDC of
$T$ with $s+2$ color classes, we obtain $\chi_d^t(T)=s+2$.
\end{proof}

\begin{prop}
\label{chi_d^t(T)=s+1,2,3, if diam=5} Let $T$ be a tree with
$diam(T)=5$ such that its center is edge $e_1e_2$. Then
\begin{equation*}
\chi_d^t(T)=\left\{
\begin{array}{ll}
s+1  & \mbox{if } e_1,e_2\in S, \\
s+2  & \mbox{if } |S|=2, \mbox{ or } |S\cap\{e_1,e_2\}|=1, \\
s+3  & \mbox{if } S\cap\{e_1,e_2\}=\emptyset, \mbox{ and } |S|\geq
3.
\end{array}
\right.
\end{equation*}
\end{prop}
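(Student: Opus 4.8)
The plan is to first pin down the shape of $T$, then treat the four ranges of $|S\cap\{e_1,e_2\}|$ and $|S|$ separately, using the earlier results as black boxes.

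\emph{Structural skeleton.} Since $\mathrm{diam}(T)=5$ and (by Theorem \ref{Center of a tree}) the center is the edge $e_1e_2$, we have $\mathrm{rad}(T)=3$ and $\epsilon(e_1)=\epsilon(e_2)=3$. Deleting $e_1e_2$ splits $T$ into $T_1\ni e_1$ and $T_2\ni e_2$; for $v\in T_i$ the path to $e_{3-i}$ runs through $e_i$, so $d(v,e_i)=d(v,e_{3-i})-1\le 2$. Hence every vertex of $T_i$ is at distance $0$, $1$, or $2$ from $e_i$; a distance-$2$ vertex must be a leaf (a further neighbour would sit at distance $3$ from $e_i$, i.e.\ $4$ from $e_{3-i}$, contradicting $\epsilon(e_{3-i})=3$); a distance-$1$ vertex is either a leaf adjacent only to $e_i$ or a support vertex carrying a distance-$2$ leaf; and $e_i\in S$ iff it has a leaf neighbour. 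In particular $S\subseteq\{e_1,e_2\}\cup N(e_1)\cup N(e_2)$ and $V(T)\setminus(L\cup S)\subseteq\{e_1,e_2\}$. Moreover each $T_i$ really attains depth $2$ from $e_i$ (otherwise $\mathrm{diam}(T)\le 4$), so each $T_i$ contains a support vertex $a_i$ adjacent to $e_i$; thus $|S|\ge 2$ always, and if some $e_i\in S$ then $\{e_i,a_1,a_2\}\subseteq S$, so $|S|\ge 3$. This makes the four cases $\{e_1,e_2\}\subseteq S$; $|S\cap\{e_1,e_2\}|=1$; $|S\cap\{e_1,e_2\}|=0$ with $|S|=2$; $|S\cap\{e_1,e_2\}|=0$ with $|S|\ge 3$ exhaustive and matching the statement. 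Throughout I use the argument of Lemma \ref{chi_d^t(T)>=s+1}: in every TDC each support vertex is a singleton color class, so the $s$ support vertices consume $s$ colors.

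\emph{The three cheaper cases.} If $\{e_1,e_2\}\subseteq S$, the skeleton gives $V(T)=L\cup S$ (every distance-$1$ and distance-$2$ vertex is a leaf or support, and $e_1,e_2\in S$), so $\chi_d^t(T)=s+1$ by Proposition \ref{chi_d^t(T)=s+1, if V-L=S}. If $|S\cap\{e_1,e_2\}|=1$, say $e_1\in S$ and $e_2\notin S$, then $V(T)=S\sqcup L\sqcup\{e_2\}$; since $e_2\notin L\cup S$ and $\mathrm{diam}(T)\ge 5$, Corollary \ref{chi_d^t(T)>=s+2, if diam>=5} gives $\chi_d^t(T)\ge s+2$, and the coloring $(\{v_1\},\dots,\{v_s\},\{e_2\},L)$ is a TDC (a support vertex of $T_1$ and $e_2$ dominate the singleton $\{e_1\}$, a support vertex of $T_2$ dominates $\{e_2\}$, each leaf dominates its support's singleton; properness is immediate since $L$ is independent), so $\chi_d^t(T)=s+2$. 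If $|S\cap\{e_1,e_2\}|=0$ and $|S|=2$, then $S=\{a_1,a_2\}$ and each $T_i$ is a broom $e_i - a_i - (\text{leaves }L_i)$; Corollary \ref{chi_d^t(T)>=s+2, if diam>=5} again gives $\chi_d^t(T)\ge s+2$, and the (essentially forced) coloring $(\{a_1\},\{a_2\},\{e_1\}\cup L_1,\{e_2\}\cup L_2)$ is a TDC (here $a_i$ dominates all of its own class $\{e_i\}\cup L_i$, $e_i$ dominates $\{a_i\}$, leaves dominate their support's singleton), so $\chi_d^t(T)=s+2$.

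\emph{The remaining case and the hard step.} Suppose $|S\cap\{e_1,e_2\}|=0$ and $|S|\ge 3$. The upper bound is easy: $(\{v_1\},\dots,\{v_s\},\{e_1\},\{e_2\},L)$ is a TDC with $s+3$ colors (a support vertex of $T_i$ dominates $\{e_i\}$, while $e_1$ and $e_2$ dominate each other, and leaves dominate their supports). The substance is $\chi_d^t(T)\ge s+3$, which I expect to be the main obstacle. Fix any TDC. The $s$ support vertices are singleton classes; neither $e_1$ nor $e_2$, nor any leaf, can lie in such a class (it would cease to be a singleton, while each support vertex has a leaf that can only dominate that singleton); hence $e_1$, $e_2$ and all leaves receive colors outside those $s$. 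If only two further colors $A,B$ were used, then $e_1,e_2$ (adjacent) get distinct ones, say $e_1\in A$-class and $e_2\in B$-class. A support vertex $v$ of $T_i$ has all neighbours in $\{e_i\}\cup L$, so it must dominate the class containing $e_i$. Choosing $i$ to be a side with at least two support vertices (one exists since $|S|\ge 3$ and both sides are nonempty) forces that class to be exactly $\{e_i\}$ (two distinct support vertices cannot share a leaf), whence all leaves get the other color; so, up to names, $A=\{e_1\}$ and the $B$-class is $\{e_2\}\cup L$. But a support vertex $w$ of $T_2$ then has to dominate $\{e_2\}\cup L$, which is impossible because $w$ is not adjacent to the leaves lying in $T_1$. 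This contradiction gives $\chi_d^t(T)\ge s+3$, hence equality. The delicate points are exactly the exclusions "$e_1$ or $e_2$, or a leaf, cannot be absorbed into a support-vertex class'' and the final contradiction on the heavier side; everything else is the bookkeeping of the skeleton above.
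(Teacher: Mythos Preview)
Your proof is correct and follows essentially the same route as the paper: the same case split on $|S\cap\{e_1,e_2\}|$ and $|S|$, the same explicit TDCs for the upper bounds, and the same ``support vertices force $s$ singleton classes, so only two extra colors is too few'' contradiction for the $s+3$ lower bound. Your version is considerably more rigorous---you spell out the diameter-$5$ skeleton (the paper takes it for granted), you invoke Corollary~\ref{chi_d^t(T)>=s+2, if diam>=5} cleanly where the paper argues ad hoc or writes ``obviously'', and your final contradiction (pick the side with $\ge 2$ supports, intersect neighborhoods to force the $e_i$-class to be a singleton, then the other side's support cannot dominate anything) is sharper than the paper's two-line sketch---but the underlying ideas coincide. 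One wording slip: in the $|S|=2$ case you write ``$a_i$ dominates all of its own class $\{e_i\}\cup L_i$''; that class is not $a_i$'s own, it is $a_i$'s neighborhood, though the intended meaning is clear.
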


\begin{proof}
Let $S=\{v_1,v_2\}$. Obviously $\chi_d^t(T)\geq 4$, and since
$(\{v_1\},\{v_2\},N(v_1),N(v_2))$ is a TDC of $T$ with cardinality
4, we obtain $\chi_d^t(T)=s+2$. Now we assume $|S|\geq 3$. If
$e_1,e_2\in S$, then $\chi_d^t(T)=s+1$, by Proposition
\ref{chi_d^t(T)=s+1, if V-L=S}. In the second case, we assume
$S\cap\{e_1,e_2\}=\{e_1\}$. By the contrary, let $\chi_d^t(T)=s+1$.
Thus $f=(\{v_1\},\{v_2\},...,\{v_s\},V(T)-S)$ is the only TDC of
$T$, and we must assign one color to the vertices in $L\cup\{e_2\}$.
But this implies that $f$ is not a TDC of $T$, a contrary. Therefore
$\chi_d^t(T)\geq s+2$, and since
$(\{v_1\},\{v_2\},...,\{v_s\},\{e_2\},L)$ is a TDC of $T$ with
cardinality $s+2$, we obtain $\chi_d^t(T)=s+2$.\\
Finally, let $S\cap\{e_1,e_2\}=\emptyset$. Then, obviously,
$\chi_d^t(T)\neq s+1$. If $\chi_d^t(T)=s+2$, then two new colors $i$
and $j$ must be assigned to the vertices in
$V(T)-S=L\cup\{e_1,e_2\}$ such that $e_1$ and $e_2$ have different
colors. Also we may assume $e_1\in N(v_1)$ and $e_2\in N(v_2)$.
Without loss of generality, we assign color $i$ to $e_1$ and color
$j$ to $e_2$. On the other hand, colors $i$ and $j$ can not be
assigned to the remained vertices, because $e_1\not\in N(v_2)$ and
$e_2\not\in N(v_1)$. Therefore, $\chi_d^t(T)\geq s+3$. Now since
$(\{v_1\},\{v_2\},...,\{v_s\},\{e_1\},\{e_2\},L)$ is a a TDC of $T$
with cardinality $s+3$, we obtain $\chi_d^t(T)=s+3$.
\end{proof}

\section{\bf Further research}
\vskip 0.4 true cm

We finish our discussion with some problems for further research.

\begin{prob}
Find $\chi_d ^t(T)$, when $T$ is a tree with diameter more than
five.
\end{prob}

\begin{prob}
Find some lower and upper bounds for $\chi_d ^t(G)+\chi_d
^t(\overline{G})$ and $\chi_d ^t(G)\cdot \chi_d ^t(\overline{G})$.
\end{prob}

\begin{prob}
For $k\geq 3$, characterize graphs $G$ satisfy $\chi_d ^t(G)=k$.
\end{prob}

\begin{prob}
Characterize graphs $G$ satisfy\\
$\bullet$ $\chi_d ^t(G)=\chi_d(G)$,\\
$\bullet$ $\chi_d ^t(G)=\chi(G)$,\\
$\bullet$ $\chi_d ^t(G)=\gamma_t(G)$, or\\
$\bullet$ $\chi_d^t(G) = \gamma_t(G)+ \min_S \chi(G[V(G)-S])$, where
$S\subset V(G)$ is a $\gamma_t(G)$-set.
\end{prob}


\bigskip
\bigskip



\end{document}